%% filename: amsart-template.tex
%% version: 1.1
%% date: 2014/07/24
%%
%% American Mathematical Society
%% Technical Support
%% Publications Technical Group
%% 201 Charles Street
%% Providence, RI 02904
%% USA
%% tel: (401) 455-4080
%%      (800) 321-4267 (USA and Canada only)
%% fax: (401) 331-3842
%% email: tech-support@ams.org
%%
%% Copyright 2008-2010, 2014 American Mathematical Society.
%%
%% This work may be distributed and/or modified under the
%% conditions of the LaTeX Project Public License, either version 1.3c
%% of this license or (at your option) any later version.
%% The latest version of this license is in
%%   http://www.latex-project.org/lppl.txt
%% and version 1.3c or later is part of all distributions of LaTeX
%% version 2005/12/01 or later.
%%
%% This work has the LPPL maintenance status `maintained'.
%%
%% The Current Maintainer of this work is the American Mathematical
%% Society.
%%
%% ====================================================================

%     AMS-LaTeX v.2 template for use with amsart
%
%     Remove any commented or uncommented macros you do not use.

\documentclass{amsart}

\newtheorem{theorem}{Theorem}[section]
\newtheorem{lemma}[theorem]{Lemma}

\theoremstyle{definition}
\newtheorem{definition}[theorem]{Definition}
\newtheorem{example}[theorem]{Example}

\newtheorem{corollary}[theorem]{Corollary}
\theoremstyle{remark}
\newtheorem{remark}[theorem]{Remark}

\numberwithin{equation}{section}

\begin{document}

\title[Clairaut Anti-Invariant Submersions]{Clairaut Anti-Invariant Submersions from \\normal almost contact metric manifolds}

%    Remove any unused author tags.

%    author one information
\author[Ta\c{s}tan, H.M.]{Hakan Mete Ta\c{s}tan}
\address{\.{I}stanbul University, Faculty of Science, Department of Mathematics, Vezneciler, 34134, \.{I}stanbul, Turkey}
\curraddr{}
\email{hakmete@istanbul.edu.tr}
\thanks{}

%    author two information
\author[Gerdan, S.]{S\.{I}bel Gerdan}
%\address{\.{I}stanbul University, Faculty of Science,  Department of Mathematics, Vezneciler, 34134, \.{I}stanbul, Turkey}
\curraddr{}
\email{sibel.gerdan@istanbul.edu.tr }
\thanks{}

\subjclass[2010]{Primary 53C15,  Secondary 53B20}

\keywords{Riemannian submersion, Anti-invariant submersion, Lagrangian submersion, Clairaut submersion, Sasakian manifold, Kenmotsu manifold.}

\date{}

\dedicatory{}

\begin{abstract}
We investigate new Clairaut conditions for anti-invariant submersions from normal almost contact metric manifolds onto Riemannian manifolds. We prove that there is no Clairaut anti-invariant submersion admitting vertical Reeb vector field when the total manifold is Sasakian. Several illustrative examples are also included.
\end{abstract}

\maketitle

\section{Introduction}
In the theory of surfaces, Clairaut's theorem states that for any geodesic $\alpha$ on a surface $S$, the function $r\sin \theta$ is constant along $\alpha$, where $r$ is the distance from a point on the surface to the rotation axis and $\theta$ is the angle between $\alpha$ amd the meridian through $\alpha$. This idea was applied to the Riemannian submersions \cite{O} by Bishop \cite{Bi} and he gave a necessary and sufficient condition for a Riemannian submersion to be Clairaut. Allison \cite{Al} considered Clairaut submersions when the total manifolds were Lorentzian and he also showed that such submersions have interesting applications in static space-times. Lee et al. \cite{Le3}, investigated new conditions for anti-invariant Riemannian submersions \cite{Sa2} to be Clairaut when the total manifolds are K$\ddot{a}$hlerian. A similar study \cite{Sa5} was done by \c{S}ahin and the first author of this paper for semi-invariant submersions \cite{Sa3}, slant submersions \cite{Sa4} and pointwise slant submersions \cite{Le2}. \\

In the present paper, we consider anti-invariant Riemannian submersions from normal almost contact metric manifolds onto Riemannian manifolds.
After giving a necessary and sufficient condition for a curve on the total manifolds to be geodesic, we focus investigate new Clairaut conditions for
considered submersions. We first give a new necessary and sufficient condition for anti-invariant submersions admitting horizontal Reeb vector field to be Clairaut in the case of the total manifolds are Sasakian. We also give a characterization for such submersions when they satisfy Clairaut condition. Contrary to the case of admitting horizontal Reeb vector field, we prove that there is no anti-invariant submersion satisfying Clairaut condition in the case of admitting vertical Reeb vector field when the total manifold is Sasakian. Finally, we present a new necessary and sufficient condition for anti-invariant submersions to be Clairaut in the case of their total manifolds are Kenmotsu. An illustrative example for each kind of submersion is also given.\\

\section{Preliminiaries}
This section consists of four subsections. In subsection 2.1, we present the fundamental definitions and notions
some classes of normal almost contact metric manifolds such as Sasakian and Kenmotsu. In subsection 2.2,
we give the basic background for Riemannian submersions. In subsection 2.3, we recall the fundamental definitions and notions of anti-invariant Riemannian and Lagrangian submersions. The definition and a characterization of Clairaut submersions are placed in the last subsection.
\subsection{Some classes of normal almost contact metric manifolds}
Let $(M,g)$ be a $(2m+1)$-dimensional Riemannian manifold and denote by $TM$ the set of vector fields of $M.$
Then, $M$ is called an \emph{almost contact metric manifold} \cite{Bla} if there exists a tensor $\varphi$ of type $(1,1)$
and global vector field $\xi$ which is called the \emph{Reeb vector field} or the \emph{characteristic vector field}
such that for any $E,F\in TM$, we have
\begin{eqnarray} \nonumber
\varphi\xi=0\,,\quad\eta(\xi)=1\,,\quad\varphi^{2}=-I+\eta\otimes\xi
\end{eqnarray}
and
\begin{eqnarray}  \label{new1}
g(\varphi E, \varphi F)=g(E,F)-\eta(E)\eta\,,
\end{eqnarray}
where $\eta$ is the dual 1-form of $\xi$. Also, it can be deduced from the above axioms that
\begin{eqnarray}  \nonumber
\eta\circ\varphi=0\quad and \quad \eta(E)=g(E,\xi)\,.
\end{eqnarray}
In this case, $(\varphi,\xi,\eta,g)$ is called the \emph{almost contact metric structure} of $M.$
The almost contact metric manifold $(M,\varphi,\xi,\eta,g)$ is called a \emph{contact metric manifold} if we have
$$\Phi(E,F)=d\eta(E,F)$$
for any $E,F\in TM$, where $\Phi$ is a 2-form in $M$ defined by $g(E,\varphi F)$. The 2-form $\Phi$ is called the \emph{fundamental 2-form} of $M$.
A contact metric structure $(\varphi,\xi,\eta,g)$ of $M$ is said to be \emph{normal} \cite{Yan} if we have
$$[\varphi,\varphi]+2d\eta\otimes\xi=0\,,$$
where $[\varphi,\varphi]$ is Nijenhuis tensor of $\varphi$.
Any normal contact metric manifold is called a \emph{Sasakian manifold.}
It is not difficult to prove that a contact metric manifold $M$ is a Sasakian manifold if and only if
\begin{eqnarray}\label{e12}
(\nabla_{E}\varphi)F=g(E,F)\xi-\eta(F)E
\end{eqnarray}
for any $E,F\in TM$, where $\nabla$ denotes the Levi-Civita connection of $M$.
For the further information of Sasakian manifolds, see the classical books \cite{Bla,Yan}.\\

A \emph{Kenmotsu manifold} $M$ \cite{Ke} is a normal  almost contact metric manifold satisfying
\begin{eqnarray}\label{f1}
(\nabla_{E}\varphi)F=g(\varphi E,F)\xi-\eta(F)\varphi E.
\end{eqnarray}
for all $E,F\in TM$. We refer to the original paper \cite{Ke} for fundamental definitions and notions of Kenmotsu manifolds.\\

\subsection{Riemannian submersions}
Let $(M,g)$ and $(N,g_{\text{\tiny$N$}})$ be Riemannian manifolds,
where $dim(M)>dim(N)$. A surjective mapping
$\pi:(M,g)\rightarrow(N,g_{N})$ is called a \emph{Riemannian
submersion}
\cite{O} if:\\

\textbf{(S1)} The rank of $\pi$ is equal to $dim(N).$  \\

In which case, for each $q\in N$, $\pi^{-1}(q)=\pi_q^{-1}$ is a $k$-dimensional
submanifold of $M$ and called a \emph{fiber}, where $k=dim(M)-dim(N).$
A vector field on $M$ is called \emph{vertical} (resp.
\emph{horizontal}) if it is always tangent (resp. orthogonal) to
fibers. A vector field $Y$ on $M$ is called \emph{basic} if $Y$ is
horizontal and $\pi$-related to a vector field $Y_{*}$ on $N,$ i.e.,
$\pi_{*}Y_{p}=Y_{*\pi(p)}$ for all $p\in M,$ where $\pi_{*}$ is the derivative map of $\pi$. As usual, we denote by
$\mathcal{V}$ and $\mathcal{H}$ the projections on the vertical
distribution $ker\pi_{*}$ and the horizontal distribution
$(ker\pi_{*})^{\bot},$ respectively.\\

\textbf{(S2)} For all $p\in M$ and for any horizontal vectors $Y$ and $Z$ at $p$ and, we have
\begin{equation}\nonumber
g(Y_{p},Z_{p})=g_{\text{\tiny$N$}}(\pi_{*}Y_{p},\pi_{*}Z_{p})\,,
\end{equation}
that is, $\pi_{*}$ preserves lengths of horizontal vectors.\\

The geometry of Riemannian submersions is characterized by O'Neill's tensors $\mathcal{T}$ and
$\mathcal{A}$, defined as follows:
\begin{equation}\label{e1}
\mathcal{T}_{E}F=\mathcal{V}\nabla_{\mathcal{V}E}\mathcal{H}F+\mathcal{H}\nabla_{\mathcal{V}E}\mathcal{V}F,
\end{equation}
\begin{equation}\label{e2}
\mathcal{A}_{E}F=\mathcal{V}\nabla_{\mathcal{H}E}\mathcal{H}F+\mathcal{H}\nabla_{\mathcal{H}E}\mathcal{V}F
\end{equation}
for any vector fields $E$ and $F$ on $M,$ where $\nabla$ is the
Levi-Civita connection of $g$. It is easy to see
that $\mathcal{T}_{E}$ and $\mathcal{A}_{E}$ are skew-symmetric
operators on the tangent bundle of $M$ reversing the vertical and
the horizontal distributions. We summarize the properties of the
tensor fields $\mathcal{T}$ and $\mathcal{A}$. Let $W,U$ be
vertical and $Y,Z$ be horizontal vector fields on $M$, then we
have
\begin{equation}\label{e3}
\mathcal{T}_{W}U=\mathcal{T}_{U}W,
\end{equation}
\begin{equation}\label{e4}
\mathcal{A}_{Y}Z=-\mathcal{A}_{Z}Y=\frac{1}{2}\mathcal{V}[Y,Z].
\end{equation}
Equation \eqref{e3} says that $\mathcal{T}$ is symmetric for vertical vector fields, while equation \eqref{e4} says that $\mathcal{A}$ is skew symmetric for horizontal vector fields. Moreover, from \eqref{e4} it follows the horizontal distribution is integrable if and only if $\mathcal{A}$ is zero, identically.  On the other hand, from (1) and (2), we obtain
\begin{equation}\label{e5}
\nabla_{W}U=\mathcal{T}_{W}U+\hat{\nabla}_{W}U,
\end{equation}
\begin{equation}\label{e6}
\nabla_{W}Y=\mathcal{T}_{W}Y+\mathcal{H}\nabla_{W}Y,
\end{equation}
\begin{equation}\label{e7}
\nabla_{Y}W=\mathcal{A}_{Y}W+\mathcal{V}\nabla_{Y}W,
\end{equation}
\begin{equation}\label{e8}
\nabla_{Y}Z=\mathcal{H}\nabla_{Y}Z+\mathcal{A}_{Y}Z,
\end{equation}
where $\hat{\nabla}_{W}U=\mathcal{V}\nabla_{W}U$.
Moreover, if $Y$ is basic, then we have
\begin{eqnarray}
\mathcal{H}\nabla_{W}Y=\mathcal{A}_{Y}W.
\end{eqnarray}
From \eqref{e5},  we see that $\mathcal{T}$ acts on the fibers as the second fundemantal form. We also observe that the horizontal distribution is totally geodesic if and only if $\mathcal{A}$ is zero, identically from \eqref{e8}.  For  details on the Riemannian
submersions, we refer to the papers, \cite{Gr,O} and to the books \cite{Fa,Sa1}.
\subsection{Anti-invariant Riemannian  submersions}
The notion of anti-invariant Riemannian submersion was first defined by \c{S}ahin \cite{Sa1} in almost Hermitian geometry and then
this notion was applied to almost contact geometry by Lee \cite{Le1} as follows.
\begin{definition} (\cite{Le1}) Let $M$ be a $(2m+1)$-dimensional almost contact metric manifold with
almost contact metric structure $(\varphi,\xi,\eta,g)$
and $N$ be a Riemannian manifold with Riemannian metric
$g_{\text{\tiny$N$}}.$ Suppose that there exists a Riemannian
submersion $\pi:M\rightarrow N$ such that the vertical distribution $ker\pi_{*}$ is
anti-invariant with respect to $\varphi,$ i.e., $\varphi ker\pi_{*}\subseteq
ker\pi_{*}^{\bot}.$ Then the Riemannian submersion $\pi$ is called
an \emph{anti-invariant Riemannian submersion}. We will briefly call such submersions as anti-invariant submersions.
\end{definition}
In this case, the horizontal distribution $ker\pi_{*}^{\bot}$ is
decomposed as
\begin{equation} \label{e15}
ker\pi_{*}^{\bot}=\varphi ker\pi_{*}\oplus\mu\,,
\end{equation}
where $\mu$ is the orthogonal complementary distribution of
$\varphi ker\pi_{*}$ in $ker\pi_{*}^{\bot}$ and it is invariant
with respect to $\varphi.$\\

We say that an anti-invariant Riemannian submersion $\pi:M\rightarrow N$ \emph{admits vertical Reeb vector field}
if the Reeb vector field $\xi$ is tangent to $ker\pi_{*}$ and it \emph{admits horizontal vector Reeb vector field}
if the Reeb vector field $\xi$ is normal to $ker\pi_{*}.$ It is easy to see that $\mu$ contains the Reeb vector field $\xi$
in the case of $\pi:M\rightarrow N$ admits horizontal vector Reeb vector field $\xi$. For any $Y\in ker\pi_{*}^{\bot}$, we write
\begin{eqnarray}
\label{f0}
\varphi Y=\mathcal{B}Y+\mathcal{C}Y\,,
\end{eqnarray}
where  $\mathcal{B}Y \in ker\pi_{*}$ and $\mathcal{C}Y\in ker\pi_{*}^{\bot}$.\\

For some details and examples of the anti-invariant Riemannian submersions from almost contact metric manifold $(M,\varphi,\xi,\eta,g)$ onto a Riemannian manifolds $(N,g_N)$, we refer to the papers \cite{Be,E,Le1} and to the book \cite{Sa1}.
\begin{definition} (\cite{Ta2}) Let $\pi$ be an anti-invariant Riemannian submersion from  an almost contact metric manifold $(M,\varphi,\xi,\eta,g)$ onto a Riemannian manifold $(N,g_N)$. If $\mu=\{0\}$ or  $\mu=span\{\xi\}$, i.e., $ker\pi_{*}^{\bot}=\varphi (ker\pi_{*})$ or $ker\pi_{*}^{\bot}=\varphi (ker\pi_{*})\oplus<\xi>$, respectively, then we call $\pi$ a \emph{Lagrangian submersion.} \\

In that case, for any horizontal vector field $X,$ we have
\begin{equation} \label{f1}
\begin{array}{c}
\mathcal{B}X=\varphi X \quad and \quad \mathcal{C}X=0
\end{array}.
\end{equation}
For the general properties of such submersions, see \cite{Ta1,Ta2}.
\end{definition}
\subsection{Clairaut submersions}
Let $S$ be a revolution surface in $\mathbb{R}^3$ with rotation axis $L$.  For any $p\in S$, we denote by $r(p)$ the distance from $p$ to $L$. Given a geodesic $\alpha:I\subset \mathbb{R}\rightarrow S$ on $S$, let $\theta(t)$ be the angle between $\alpha(t)$ and the meridian curve through $\alpha(t)$, $t\in I$. A well-known Clairaut's theorem says that for any geodesic $\alpha$ on $S$ the product $r\,sin\theta$ is constant along $\alpha$, i.e., it is independent of $t$. In the theory of Riemannian submersions,  Bishop \cite{Bi} introduces the notion of Clairaut submersion in the following way.\\
\begin{definition}\label{defn1}(\cite{Bi})
A Riemannian submersion $\pi: (M,g)\rightarrow (N,g_{N})$ is called a \emph{Clairaut submersion} if there exists a positive function $r$ on $M$ such that, for any geodesic $\alpha$ on $M$, the function $(r\circ \alpha)sin\theta$ is constant, where,  for any $t$, $\theta(t)$ is the angle between $\dot{\alpha}(t)$ and the horizontal space at $\alpha(t)$.
\end{definition}
He also gave the following necessary and sufficient condition for a Riemannian submersion to be a Clairaut submersion as follows.
\begin{theorem}\label{dortdort}(\cite{Bi})
Let  $\pi: (M,g)\rightarrow (N,g_{\text{\tiny$N$}})$ be Riemannian submersion with connected fibers. Then $\pi$ is a Clairaut submersion with $r=e^{f}$ if and only if each fibre is totally umbilical and has the mean curvature vector field $H=-\nabla f$, where $\nabla f$ is the gradient of the function $f$ with respect to $g$.
\end{theorem}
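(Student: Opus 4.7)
The plan is to differentiate the Clairaut quantity $(r\circ\alpha)\sin\theta$ along an arbitrary geodesic $\alpha$ of $M$ and re-express the result as a pointwise condition on the O'Neill tensor $\mathcal{T}$. Fix a geodesic $\alpha$ of constant speed $c=\|\dot\alpha\|$, decompose $\dot\alpha=U+X$ into its vertical and horizontal parts, so that $\|U\|=c\sin\theta$ and $\|X\|=c\cos\theta$, and observe that the constancy of $(r\circ\alpha)\sin\theta=e^{f\circ\alpha}\|U\|/c$ is equivalent to
\[
\dot\alpha(f)\,\|U\|^{2}+\tfrac{1}{2}\tfrac{d}{dt}\|U\|^{2}=0.
\]
Using $\nabla_{\dot\alpha}\dot\alpha=0$ one has $\nabla_{\dot\alpha}U=-\nabla_{\dot\alpha}X$. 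Extending $U$ and $X$ to local vertical and horizontal vector fields, \eqref{e6} and \eqref{e8} together with $\mathcal{A}_{X}X=0$ from \eqref{e4} identify the vertical part of $\nabla_{\dot\alpha}X$ as $\mathcal{T}_{U}X$, and then metric-compatibility and the skew-symmetry of $\mathcal{T}_{U}$ give $\tfrac{1}{2}\tfrac{d}{dt}\|U\|^{2}=g(\mathcal{T}_{U}U,X)$. Hence the Clairaut condition becomes
\[
g(\nabla f,U)\,\|U\|^{2}+g(\nabla f,X)\,\|U\|^{2}+g(\mathcal{T}_{U}U,X)=0
\]
along every geodesic with $U\neq 0$.

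To extract the umbilic condition I would use that at each $p\in M$ an arbitrary pair $(U,X)$ of a vertical and a horizontal vector at $p$ is realised as $\dot\alpha(0)$ for some geodesic through $p$, so the identity above must hold at $p$ for every such pair. Rescaling $X\mapsto\lambda X$ and sending $\lambda\to 0$ forces $U(f)=0$ for every vertical $U$, whence $\nabla f$ is horizontal. The remaining identity then reads $g(\mathcal{T}_{U}U+\|U\|^{2}\nabla f,X)=0$ for every horizontal $X$, hence $\mathcal{T}_{U}U=-\|U\|^{2}\nabla f$. Polarising in $U$ and invoking the symmetry $\mathcal{T}_{U}W=\mathcal{T}_{W}U$ from \eqref{e3} yields $\mathcal{T}_{U}W=-g(U,W)\nabla f$ for all vertical $U,W$, which is exactly the assertion that each fiber is totally umbilical with mean curvature vector $H=-\nabla f$. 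The converse runs these implications backwards: starting from the umbilic condition with $H=-\nabla f$, the identity above holds pointwise, so $(r\circ\alpha)\sin\theta$ has vanishing $t$-derivative along every geodesic.

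The step I expect to require the most care is the computation $\tfrac{1}{2}\tfrac{d}{dt}\|U\|^{2}=g(\mathcal{T}_{U}U,X)$: one must correctly separate the vertical and horizontal components of $\nabla_{\dot\alpha}\dot\alpha$, justify passing from vector fields along $\alpha$ to local extensions (which is routine since the relevant O'Neill expressions are tensorial), and apply the skew-symmetry identity $g(\mathcal{T}_{U}X,U)=-g(\mathcal{T}_{U}U,X)$ correctly. The hypothesis of connected fibers is not used in the pointwise tensorial identification of $\mathcal{T}_{U}W$; it enters only to guarantee that the globally defined $f=\log r$ is compatible across each fiber.
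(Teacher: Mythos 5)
The paper itself offers no proof of Theorem \ref{dortdort}: it is quoted verbatim from Bishop \cite{Bi} and used as a black box, so there is no in-paper argument to compare yours against. Judged on its own, your proposal is correct and is essentially the classical proof of Bishop's criterion: constancy of $e^{f\circ\alpha}\sin\theta$ is encoded through the smooth quantity $e^{2(f\circ\alpha)}\|U\|^{2}$ (a good choice, since it also disposes of the points where the vertical part vanishes), the identity $\tfrac12\tfrac{d}{dt}\|U\|^{2}=g(\mathcal{T}_{U}U,X)$ follows correctly from $\nabla_{\dot\alpha}\dot\alpha=0$, \eqref{e6}, \eqref{e8}, $\mathcal{A}_{X}X=0$ from \eqref{e4}, and the skew-symmetry of $\mathcal{T}_{U}$ (the appeal to local extensions is harmless because $\mathcal{T}$ and $\mathcal{A}$ are tensorial), and the passage from the curve-wise condition to the pointwise one via arbitrary geodesic initial data, the scaling $X\mapsto\lambda X$, and polarization with \eqref{e3} yields exactly $\mathcal{T}_{U}W=-g(U,W)\nabla f$, i.e.\ totally umbilical fibers with $H=-\nabla f$; the converse is the same computation read backwards. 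This is also precisely the technique the paper employs for its own results, Theorems \ref{theo3} and \ref{theo2}, where $g(V,V)=\nu\sin^{2}\theta$ is differentiated along a geodesic and fed into the Clairaut condition \eqref{sub8}, so your route is in the same spirit as the rest of the paper. The only imprecision is your closing remark on connected fibers: since $r=e^{f}$ is given globally on $M$, there is nothing to glue; connectedness merely upgrades the pointwise conclusion $U(f)=0$ to $f$ being constant on each fiber (the form in which Bishop uses it), and indeed it plays no role in your pointwise tensorial argument.
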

\section{Anti-invariant submersions admitting horizontal\\
reeb vector field from sasakian manifolds}
In this section, we study anti-invariant submersions from Sasakian manifolds admitting horizontal Reeb vector field. After giving a new necessary and sufficient condition for such submersions to be Clairaut, we prove some characteristic results for this kind of submersions. We also present an illustrative example for such submersions at the end of this section.\\

As seen from Definition \ref{defn1}, the origin of the notion of a Clairaut submersion comes from geodesic on its total space. Therefore, we will investigate a necessary and sufficient condition for a curve on the total space to be geodesic.
\begin{lemma}
Let $\pi$ be an anti-invariant Riemannian submersion from a Sasakian manifold $(M,\varphi,\xi,\eta,g)$ onto a Riemannian manifold $(N, g_{\text{\tiny$N$}})$ admitting horizontal Reeb vector field. If $\alpha:I\subset \mathbb{R}\rightarrow M$ is a regular curve and $V(t)$ and $X(t)$ are the vertical and horizontal components of the tangent vector field $\dot{\alpha}(t)=E$ of $\alpha(t)$, respectively, then $\alpha$ is a geodesic if and only if along $\alpha$ the following equations
\begin{eqnarray}\label{sub1}
\mathcal{V}\nabla_{\dot{\alpha}}\mathcal{B}X+\mathcal{A}_{X}\varphi V +\mathcal{T}_{V}\varphi V+(\mathcal{T}_{V}+\mathcal{A}_{X})\mathcal{C}X +\eta(X)V=0 \,,
\end{eqnarray}
\begin{eqnarray}\label{sub2}
\mathcal{H}\nabla_{\dot{\alpha}} \mathcal{C}X + \mathcal{H}\nabla_{\dot{\alpha}} \varphi V +(\mathcal{T}_{V}+\mathcal{A}_{X})\mathcal{B} X + \eta(X)X-\nu\xi =0\,
\end{eqnarray}
hold, where $\sqrt{\nu}$ is constant speed of $\alpha.$
\end{lemma}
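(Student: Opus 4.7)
The plan is to rewrite $\nabla_{\dot\alpha}\dot\alpha$ via $\varphi$ and the Sasakian identity \eqref{e12}, and then split the result using O'Neill's formulas \eqref{e5}--\eqref{e8}. A direct vertical/horizontal split of $\nabla_{V+X}(V+X)$ produces $\mathcal{T}$-- and $\mathcal{A}$--terms acting on $V$ and $X$, not on the $\varphi$--pieces $\varphi V$, $\mathcal{B}X$, $\mathcal{C}X$ that appear in the claimed equations, so the Sasakian twist is essential. Writing $\dot\alpha = V + X$, I apply $(\nabla_E\varphi)F = g(E,F)\xi - \eta(F)E$ to each of $V$ and $X$. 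Because $\xi$ is horizontal one has $\eta(V)=0$, $g(\dot\alpha,V)=g(V,V)$, $g(\dot\alpha,X)=g(X,X)$ and $\eta(\dot\alpha)=\eta(X)$; inserting $\varphi X = \mathcal{B}X + \mathcal{C}X$ and summing the two computations gives the master identity
\[
\varphi(\nabla_{\dot\alpha}\dot\alpha) \;=\; \nabla_{\dot\alpha}\bigl(\varphi V + \mathcal{B}X + \mathcal{C}X\bigr) + \eta(X)\dot\alpha - \nu\xi,
\]
where $\nu = g(\dot\alpha,\dot\alpha)$.

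For the forward direction, $\nabla_{\dot\alpha}\dot\alpha = 0$ forces the right-hand side to vanish. Projecting vertically and horizontally: the horizontal fields $\varphi V$ and $\mathcal{C}X$ contribute $\mathcal{T}_V\varphi V + \mathcal{A}_X\varphi V$ and $(\mathcal{T}_V+\mathcal{A}_X)\mathcal{C}X$ to the vertical part and $\mathcal{H}\nabla_{\dot\alpha}\varphi V$, $\mathcal{H}\nabla_{\dot\alpha}\mathcal{C}X$ to the horizontal part, while the vertical field $\mathcal{B}X$ contributes $\mathcal{V}\nabla_{\dot\alpha}\mathcal{B}X$ vertically and $(\mathcal{T}_V+\mathcal{A}_X)\mathcal{B}X$ horizontally. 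Combined with the two extra terms $\eta(X)V$ (vertical) and $\eta(X)X - \nu\xi$ (horizontal) from the right-hand side, these reproduce exactly \eqref{sub1} and \eqref{sub2}. Only routine checking via \eqref{e5}--\eqref{e8} is required here.

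The converse is the delicate direction. Equations \eqref{sub1} and \eqref{sub2} together say precisely $\varphi(\nabla_{\dot\alpha}\dot\alpha) = 0$, so $\nabla_{\dot\alpha}\dot\alpha \in \ker\varphi = \mathbb{R}\xi$ (using that $\xi$ is horizontal in this setting), and we may write $\nabla_{\dot\alpha}\dot\alpha = \lambda\xi$ for a smooth $\lambda$ along $\alpha$. The main obstacle is to rule out a nontrivial $\xi$--component, since the master identity alone gives only $\lambda\xi$ and not $0$. To remove $\lambda$, I use $\nabla_E\xi = -\varphi E$, a standard consequence of \eqref{e12}, together with $g(E,\varphi E)=0$, to obtain $\lambda = g(\nabla_{\dot\alpha}\dot\alpha,\xi) = \dot\alpha(\eta(X))$; the constant-speed hypothesis $g(\dot\alpha,\dot\alpha) = \nu$ differentiates to $0 = 2g(\nabla_{\dot\alpha}\dot\alpha,\dot\alpha) = 2\lambda\eta(X)$. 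The two relations combine to $\tfrac{d}{dt}\eta(X)^2 = 0$, so $\eta(X)$ is constant along $\alpha$; hence $\lambda\equiv 0$ and $\alpha$ is a geodesic.
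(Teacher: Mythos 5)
Your proof is correct and follows essentially the same route as the paper: apply the Sasakian identity \eqref{e12} to $\dot{\alpha}=V+X$, decompose with the O'Neill formulas \eqref{e5}$\sim$\eqref{e8}, and take vertical and horizontal parts, which is exactly how the paper arrives at \eqref{sub1} and \eqref{sub2}. Your only real departure is in the converse, where the paper merely says ``it is easy to see'': you correctly observe that \eqref{sub1}--\eqref{sub2} by themselves yield only $\varphi(\nabla_{\dot{\alpha}}\dot{\alpha})=0$, hence $\nabla_{\dot{\alpha}}\dot{\alpha}=\lambda\xi$, and your argument via $\nabla_{E}\xi=-\varphi E$ and the constant-speed hypothesis to force $\lambda\equiv 0$ supplies precisely the detail the paper glosses over.
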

\begin{proof}
From \eqref{e12},
we have
\begin{eqnarray}
\nabla_{\dot{\alpha}}\varphi \dot{\alpha}=\varphi\nabla_{\dot{\alpha}}\dot{\alpha}+g(\dot{\alpha},\dot{\alpha})\xi-\eta(\dot{\alpha})\dot{\alpha}.
\end{eqnarray}
Since $\dot{\alpha}=V+X $ and $g(\dot{\alpha},\dot{\alpha})=\nu$, we can write
\[\nabla_{V+X} \varphi(V+X)=\varphi \nabla_{\dot{\alpha}}\dot{\alpha} +\nu\xi-\eta(V)\dot{\alpha}-\eta(X)\dot{\alpha}.\] By direct computations, we obtain
\[\nabla_{V} \varphi V+\nabla_{V} \varphi X+\nabla_{X} \varphi V+\nabla_{X} \varphi X= \varphi \nabla_{\dot{\alpha}} \dot{\alpha}+\nu\xi-\eta(X) V-\eta(X)X,\] since $\eta(V)=0$.\\

Using \eqref{e5}$\sim$\eqref{e8}, we get \\

$\mathcal{H}(\nabla_{\dot{\alpha}}\varphi V +\nabla_{\dot{\alpha}}\mathcal{C}X)+(\mathcal{T}_{V}+\mathcal{A}_{X})(\mathcal{B}X+\mathcal{C}X)+\mathcal{V}\nabla_{\dot{\alpha}}\mathcal{B}X+\mathcal{A}_{X} \varphi V+\mathcal{T}_{V} \varphi V$\\

$=\varphi \nabla_{\dot{\alpha}} \dot{\alpha} +\nu\xi-\eta(X)V-\eta(X)X$.\\

Taking the vertical and horizontal parts of above equation, we get
\begin{eqnarray}\label{sub3}
\mathcal{V}\nabla_{\dot{\alpha}} \mathcal{B}X + \mathcal{A}_{X} \varphi V+\mathcal{T}_{V} \varphi V+ (\mathcal{T}_{V}+\mathcal{A}_{X})  \mathcal{C}X= \mathcal{V}\varphi \nabla_{\dot{\alpha}}\dot{\alpha}-\eta(X)V
\end{eqnarray}
and
\begin{eqnarray}\label{sub4}
\mathcal{H}\nabla_{\dot{\alpha}} \mathcal{C}X +\mathcal{H}\nabla_{\dot{\alpha}} \varphi V +(\mathcal{T}_{V}+\mathcal{A}_{X})  \mathcal{B}X=\mathcal{H}\varphi \nabla_{\dot{\alpha}}\dot{\alpha}+\nu\xi-\eta(X)X
\end{eqnarray}
From \eqref{sub3} and \eqref{sub4}, it is easy to see that $\alpha$ is a geodesic if and only if  \eqref{sub1} and \eqref{sub2} hold.
\end{proof}
\begin{theorem}\label{theo3}
Let $\pi$ be an anti-invariant Riemannian submersion from a Sasakian manifold $(M,\varphi,\xi,\eta,g)$ onto a Riemannian manifold $(N, g_{\text{\tiny$N$}})$ admitting horizontal Reeb vector field. Then $\pi$ is a Clairaut submersion with $r=e^{f}$ if and only if along $\alpha$
\begin{eqnarray}\label{sub10}\qquad
g(\nabla f,X) {\parallel V\parallel}^2= g(\eta(X)X+\mathcal{H}\nabla_{\dot{\alpha}} \mathcal{C}X +(\mathcal{T}_{V}+\mathcal{A}_{X})  \mathcal{B}X, \varphi V)
\end{eqnarray}
holds, where $V(t)$ and $X(t)$ are the vertical and horizontal components of the tangent vector field $\dot{\alpha}(t)$ of the geodesic $\alpha(t)$ on $M$, respectively.
\end{theorem}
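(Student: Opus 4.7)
My plan is to derive \eqref{sub10} by matching two independent expressions for $\frac{d}{dt}\|V\|^{2}$: one coming from an analytic reformulation of the Clairaut condition, the other from taking the inner product of the horizontal geodesic equation \eqref{sub2} with $\varphi V$. By Bishop's Theorem \ref{dortdort}, when $\pi$ is Clairaut with $r=e^{f}$ the fibers are totally umbilical with mean curvature $H=-\nabla f$, so $\nabla f$ is horizontal; I will use this fact freely.

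First, with $\nu=g(\dot{\alpha},\dot{\alpha})$ denoting the constant squared speed and $\theta$ the angle between $\dot{\alpha}$ and the horizontal, I would write $\|V\|^{2}=\nu\sin^{2}\theta$. Logarithmic differentiation of $(e^{f}\circ\alpha)\sin\theta=\text{const}$ combined with horizontality of $\nabla f$ (so $\dot{\alpha}(f)=g(\nabla f,X)$) eliminates $\theta'$ and gives
\begin{equation*}
\frac{d}{dt}\|V\|^{2}=-2\|V\|^{2}\,g(\nabla f,X).
\end{equation*}

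Second, I would pair \eqref{sub2} with the horizontal vector $\varphi V$. Several structural facts make the computation collapse cleanly: (i) since $\xi$ is horizontal and $V$ is vertical, $\eta(V)=0$, so \eqref{new1} yields $\|\varphi V\|^{2}=\|V\|^{2}$; (ii) skew-symmetry of the fundamental $2$-form together with $\varphi\xi=0$ forces $g(\xi,\varphi V)=0$, killing the $\nu\xi$ contribution; (iii) $\varphi V$ is horizontal by anti-invariance of $ker\pi_{*}$, so
\begin{equation*}
g(\mathcal{H}\nabla_{\dot{\alpha}}\varphi V,\varphi V)=g(\nabla_{\dot{\alpha}}\varphi V,\varphi V)=\tfrac{1}{2}\tfrac{d}{dt}\|V\|^{2}.
\end{equation*}
Moving the remaining terms of \eqref{sub2} across, this rearranges to
\begin{equation*}
\tfrac{1}{2}\tfrac{d}{dt}\|V\|^{2}=-g\bigl(\mathcal{H}\nabla_{\dot{\alpha}}\mathcal{C}X+(\mathcal{T}_{V}+\mathcal{A}_{X})\mathcal{B}X+\eta(X)X,\,\varphi V\bigr).
\end{equation*}

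Equating the two expressions for $\frac{d}{dt}\|V\|^{2}$ immediately produces \eqref{sub10}, and because every step is reversible the argument proves both directions. The main delicate point I anticipate is the horizontality of $\nabla f$, which is needed to identify $\dot{\alpha}(f)$ with $g(\nabla f, X)$: in the forward direction Bishop's theorem supplies it, while in the converse direction it is tacit in the hypothesis that $r=e^{f}$ witnesses the Clairaut property. I also expect the bookkeeping to require using $g(E,\varphi F)=-g(\varphi E, F)$ (from the fundamental $2$-form) in several places, but no additional geometric input beyond \eqref{e12}, \eqref{new1}, and the geodesic identity already established in Lemma~3.1 should be necessary.
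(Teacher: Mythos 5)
Your proposal is correct and follows essentially the same route as the paper: both arguments pair the horizontal geodesic equation \eqref{sub2} with $\varphi V$, differentiate the Clairaut relation $(e^{f}\circ\alpha)\sin\theta=\mathrm{const}$, and identify $\frac{d(f\circ\alpha)}{dt}$ with $g(\nabla f,X)$, the only cosmetic differences being that you eliminate $\theta'$ early by working with $\frac{d}{dt}\|V\|^{2}$ and obtain $g(\mathcal{H}\nabla_{\dot{\alpha}}\varphi V,\varphi V)=\tfrac{1}{2}\tfrac{d}{dt}\|V\|^{2}$ from $\|\varphi V\|=\|V\|$ rather than via \eqref{e12} as the paper does. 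Your explicit remark on the horizontality of $\nabla f$ (needed for $\dot{\alpha}(f)=g(\nabla f,X)$) is a point the paper passes over silently, and your final identity matches \eqref{sub10} exactly.
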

\begin{proof}
Let $\alpha(t)$ be a geodesic with  speed $\sqrt{\nu}$ on $M$, then we have \[\nu={\parallel\dot{\alpha}(t)\parallel}^{2},\]
From this equality, we deduce that
\begin{eqnarray}\label{sub5}\qquad
g(V(t), V(t))=\nu sin^2 \theta (t)\qquad  \mbox{and} \qquad g(X(t), X(t))=\nu cos^2 \theta (t),
\end{eqnarray}
where $\theta(t)$ is the angle between $\dot{\alpha}(t)$ and the horizontal space at $\alpha(t)$. Differentiating the first expression in \eqref{sub5}, we obtain
\[\frac{d}{dt}g(V(t),V(t))=2g(\nabla_{\dot{\alpha(t)}}V(t),V(t))=2\nu cos \theta(t) sin\theta(t)\frac{d\theta}{dt}(t).\]
Hence using the Sasakian structure, we get
\begin{eqnarray}\label{sub6}\qquad
 g(\varphi\nabla_{\dot{\alpha(t)}}V(t),\varphi V(t) )=\nu cos \theta (t) sin\theta(t) \frac{d\theta}{dt}(t)\,,
 \end{eqnarray}
 \\
  At this point, we know \[\varphi \nabla_{\dot{\alpha}} V=\nabla_{\dot{\alpha}}\varphi V-g(\dot{\alpha}, V)\xi\] from \eqref{e12}. Hence, \[g(\varphi\nabla_{\dot{\alpha}}V,\varphi V )=g(\nabla_{\dot{\alpha}}\varphi V, \varphi V)=g(\mathcal{H}\nabla_{\dot{\alpha}}\varphi V, \varphi V),\] since $g(\xi,\varphi V)=0$ and $\varphi V$ is horizontal.\\

Thus, from \eqref{sub6}, we obtain
\begin{eqnarray}
g(\mathcal{H}\nabla_{\dot{\alpha}}\varphi V, \varphi V)=\nu cos \theta   sin\theta \frac{d\theta}{dt}
\end{eqnarray}

By \eqref{sub2}, we find along $\alpha$,
\begin{eqnarray}\label{sub7}\qquad
-g(\mathcal{H}\nabla_{\dot{\alpha}}\mathcal{C} X+(\mathcal{T}_{V}+\mathcal{A}_{X})  \mathcal{B}X+\eta(X)X, \varphi V)=\nu cos \theta  sin\theta \frac{d\theta}{dt},
\end{eqnarray}
since $g(\xi, \varphi V)=0$.\\

On the other hand, $\pi$ is a Clairaut submersion with $r=e^{f}$ if and only if \[\frac{d}{dt}(e^{f} sin \theta)=0\Leftrightarrow e^{f} (\frac{df}{dt} sin \theta+cos \theta \frac{d\theta}{dt})=0\]

Multiplying last equation with non-zero factor $\nu sin\theta$, we get

\begin{eqnarray}\label{sub8}\qquad
\frac{df}{dt} \nu sin^{2} \theta+\nu cos\theta sin\theta \frac{d\theta}{dt}=0
\end{eqnarray}
From  \eqref{sub7} and \eqref{sub8}, we obtain
\begin{eqnarray}\label{sub9}\qquad
\frac{df}{dt}(\alpha(t)) \parallel V\parallel^{2} = g(\mathcal{H}\nabla_{\dot{\alpha}}\mathcal{C} X+(\mathcal{T}_{V}+\mathcal{A}_{X})  \mathcal{B}X-\eta(X)X, \varphi V)
\end{eqnarray}
Since $\displaystyle{ \frac{df}{dt}(\alpha(t))=\dot{\alpha}[f]=g(\nabla f,\dot{\alpha})=g(\nabla f,X)}$, the assertion \eqref{sub10} follows from \eqref{sub9}.
\end{proof}
From \eqref{sub10}, we get the following result.
\begin{corollary}\label{cor1}
Let $\pi$ be an Clairaut anti-invariant  Riemannian submersion from a Sasakian manifold $(M,\varphi,\xi,\eta,g)$ onto a Riemannian manifold $(N, g_{\text{\tiny$N$}})$ admitting horizontal Reeb vector field. Then we have
\begin{equation}\label{cor2}
g(\nabla f, \xi)=0\,.
\end{equation}
\end{corollary}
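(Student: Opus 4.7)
The plan is to apply Theorem \ref{theo3} along a carefully chosen geodesic and to evaluate \eqref{sub10} at its starting point. Fix an arbitrary point $p\in M$ and pick any nonzero vertical vector $V_0\in ker\pi_{*}$ at $p$. Let $\alpha$ be the unique geodesic with $\alpha(0)=p$ and $\dot{\alpha}(0)=V_0+\xi_{p}$. Then at $t=0$ the horizontal and vertical components of $\dot{\alpha}$ are $X(0)=\xi_{p}$ and $V(0)=V_0$, respectively. Evaluating \eqref{sub10} at $t=0$ forces $g(\nabla f,\xi_{p})\|V_0\|^{2}$ to equal the right-hand side; if I can show the latter vanishes then, since $V_0\neq0$, I will conclude $g(\nabla f,\xi_{p})=0$, and since $p$ was arbitrary the corollary follows.

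Next, I would simplify the right-hand side at $t=0$. Because $\varphi\xi=0$, both $\mathcal{B}X(0)=\mathcal{B}\xi_{p}=0$ and $\mathcal{C}X(0)=\mathcal{C}\xi_{p}=0$, so the term $(\mathcal{T}_{V}+\mathcal{A}_{X})\mathcal{B}X$ drops out at $t=0$. For the leading contribution $g(\eta(X)X,\varphi V)|_{t=0}=g(\xi_{p},\varphi V_0)$, I would combine \eqref{new1} with $\varphi^{2}=-I+\eta\otimes\xi$ to obtain the skew-symmetry $g(\varphi E,F)=-g(E,\varphi F)$, and then use $\varphi\xi=0$ to get $g(\xi_{p},\varphi V_0)=-g(\varphi\xi_{p},V_0)=0$. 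After these reductions only the derivative term $g(\mathcal{H}\nabla_{\dot{\alpha}}\mathcal{C}X,\varphi V)|_{t=0}$ remains to be controlled.

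The main obstacle is that the vector field $\mathcal{C}X(t)$ along $\alpha$ need not be zero for $t>0$, so one cannot directly read off its covariant derivative from its value at $t=0$. My idea is to show that the scalar function $t\mapsto g(\mathcal{C}X(t),\varphi V(t))$ vanishes identically along $\alpha$. Since $\varphi V$ is horizontal, $g(\mathcal{C}X,\varphi V)=g(\varphi X,\varphi V)$, and by \eqref{new1} this equals $g(X,V)-\eta(X)\eta(V)$, which is $0$ because $X$ is horizontal, $V$ is vertical, and $\eta(V)=g(V,\xi)=0$ (the Reeb field is horizontal). Differentiating this identity in $t$ at $t=0$ and using $\mathcal{C}X(0)=0$ kills the $g(\mathcal{C}X,\nabla_{\dot{\alpha}}\varphi V)$ piece, leaving $g(\nabla_{\dot{\alpha}}\mathcal{C}X,\varphi V)|_{t=0}=0$; the horizontal projection is cost-free since $\varphi V$ is horizontal. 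This completes the vanishing of the right-hand side of \eqref{sub10} at $t=0$ and yields \eqref{cor2}.
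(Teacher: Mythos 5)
Your proposal is correct and follows essentially the same route as the paper, which obtains the corollary directly from \eqref{sub10} by specializing to a geodesic whose horizontal component is the (horizontal) Reeb vector field, so that $\mathcal{B}\xi=\mathcal{C}\xi=0$ and $g(\xi,\varphi V)=0$ kill the right-hand side while the left-hand side reduces to $g(\nabla f,\xi)\|V\|^{2}$ with $V\neq0$. Your extra step, showing $g(\mathcal{C}X,\varphi V)\equiv0$ along the curve and differentiating to dispose of the term $g(\mathcal{H}\nabla_{\dot{\alpha}}\mathcal{C}X,\varphi V)$ at $t=0$, carefully justifies a point the paper leaves implicit.
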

Next, we give a characterization for Clairaut anti-invariant Riemannian submersion admitting horizontal Reeb vector field.
\begin{theorem}\label{theo1}
Let $\pi$ be a  Clairaut anti-invariant Riemannian submersion  admitting horizontal Reeb vector field from a Sasakian manifold $(M,\varphi,\xi,\eta,g)$ onto a Riemannian manifold $(N, g_{\text{\tiny$N$}})$ with $r=e^{f}$. Then at least one of the following statements are true:\\

\textbf{(a)} $f$ is constant on $\varphi ker \pi_{*}$\,,\\

\textbf{(b)} the fibers of $\pi$ are one dimensional\,,\\

\textbf{(c)} $\mathcal{A}_{JW} JX=X(f)W$\\

for $X\in \mu$ and $W\in ker \pi_{*}$ such that $JW$ is basic.
\end{theorem}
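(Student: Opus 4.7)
The plan is to derive statement (c) directly; the three-way disjunction then follows immediately. Theorem~\ref{dortdort} converts the Clairaut hypothesis $r=e^{f}$ into the totally umbilical condition on the fibres with mean curvature vector $-\nabla f$, i.e., $\mathcal{T}_{U}Z=-g(U,Z)\nabla f$ for vertical $U,Z$. Corollary~\ref{cor1} supplies the complementary fact $\xi(f)=0$, which I will need to eliminate stray $\eta$-terms.

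Fix $X\in\mu$ and $W\in\ker\pi_{*}$ with $\varphi W$ basic, and let $U$ be an arbitrary vertical vector field. My strategy is to compute $g(\nabla_{U}\varphi W,\varphi X)$ in two different ways and compare. In the first way I expand $\nabla_{U}\varphi W=\varphi\nabla_{U}W+g(U,W)\xi$ via \eqref{e12} (using $\eta(W)=0$), bring the two $\varphi$'s together through \eqref{new1}, and then split $\nabla_{U}W=\mathcal{T}_{U}W+\hat{\nabla}_{U}W$ by \eqref{e5}. Because $\hat{\nabla}_{U}W$ is vertical while $X$ and $\xi$ are horizontal, only the $\mathcal{T}$-term contributes; the totally umbilical formula collapses it to $-g(U,W)X(f)$, the $\eta(\nabla_{U}W)\eta(X)$ correction from \eqref{new1} equals $-g(U,W)\xi(f)\eta(X)$ and thus vanishes by Corollary~\ref{cor1}, and the extra $g(U,W)g(\xi,\varphi X)$ piece vanishes because $\eta\circ\varphi=0$. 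The outcome is $g(\nabla_{U}\varphi W,\varphi X)=-g(U,W)X(f)$.

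In the second way I exploit that $\varphi W$ is basic: combining \eqref{e6} with the basic-vector identity $\mathcal{H}\nabla_{U}\varphi W=\mathcal{A}_{\varphi W}U$ gives $\nabla_{U}\varphi W=\mathcal{T}_{U}\varphi W+\mathcal{A}_{\varphi W}U$. Since $\mathcal{T}_{U}\varphi W$ is vertical it pairs trivially against the horizontal $\varphi X$, while the skew-symmetry identity $g(\mathcal{A}_{\varphi W}U,\varphi X)=-g(U,\mathcal{A}_{\varphi W}\varphi X)$ converts the surviving term into $-g(U,\mathcal{A}_{\varphi W}\varphi X)$.

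Equating the two outputs gives $g(U,\mathcal{A}_{\varphi W}\varphi X-X(f)W)=0$ for every vertical $U$. Because both $\mathcal{A}_{\varphi W}\varphi X$ and $X(f)W$ are themselves vertical, their difference is a vertical vector orthogonal to the entire vertical distribution, hence zero; this is precisely (c). The main care-point will be the bookkeeping in the Sasakian expansion---verifying that each stray $\eta$-correction dies either by construction ($\eta\circ\varphi=0$ or $\eta(W)=0$) or through Corollary~\ref{cor1}---together with the observation that $\mathcal{T}_{U}\varphi W$ is vertical, so that the second calculation indeed sees only the $\mathcal{A}$-piece.
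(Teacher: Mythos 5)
Your derivation of \textbf{(c)} is correct, and its computational core coincides with the paper's own route to its equation \eqref{sub18}: two evaluations of $g(\nabla_{U}\varphi W,\varphi X)$, one via the Sasakian identity \eqref{e12} together with the umbilicity relation $\mathcal{T}_{U}W=-g(U,W)\nabla f$ from Theorem \ref{dortdort}, the other via the basic-field identity $\mathcal{H}\nabla_{U}\varphi W=\mathcal{A}_{\varphi W}U$ and the skew-symmetry of $\mathcal{A}_{\varphi W}$. Where you genuinely differ is in the logical architecture. The paper additionally proves the identity \eqref{sub12} and its consequence \eqref{sub14}, and then concludes by cases on where $\nabla f$ lies: if $\nabla f\in\varphi\,\ker\pi_{*}$, the equality case of the Cauchy--Schwarz inequality in \eqref{sub14} forces \textbf{(a)} or \textbf{(b)}; if $\nabla f\in\mu\setminus\{\xi\}$, it invokes \eqref{sub18} to get \textbf{(c)}. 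You instead note that \eqref{sub18}, i.e. \textbf{(c)}, holds unconditionally, so the ``at least one'' statement follows at once; this is logically sound (the paper's own derivation of \eqref{sub18} uses no hypothesis on $\nabla f$, so for the theorem as stated the final case distinction is redundant), and your bookkeeping of the $\eta$-terms --- killing $\eta(\nabla_{U}W)\eta(X)$ through Corollary \ref{cor1} and $g(\xi,\varphi X)=0$ --- is in fact more careful than the paper's passing restriction ``$X\neq\xi$''. What your shortcut buys is brevity; what it gives up is the extra information in the half of the proof you drop: the Schwarz-equality dichotomy for $\nabla f\in\varphi\,\ker\pi_{*}$ is precisely what the paper exploits afterwards (for instance in the Lagrangian corollary, where \textbf{(c)} is vacuous because $\varphi X=0$ or $\mu=\{0\}$), so your argument proves the stated theorem but does not by itself recover those later consequences.
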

\begin{proof}
Let $\pi$ be a  Clairaut anti-invariant Riemannian submersion  admitting horizontal Reeb vector field from a Sasakian manifold $(M,\varphi,\xi,\eta,g)$ onto a Riemannian manifold $(N, g_{\text{\tiny$N$}}) $ with $r=e^{f}$. From Bishop's theorem, we have

\begin{eqnarray}\label{sub11}\qquad
  \mathcal{T}_{U} V=-g(U,V) \nabla f
  \end{eqnarray}
  where $U,V\in ker \pi_{*}$, If we multiply this equation by $\varphi W$ for $W\in ker \pi_{*}$ and using \eqref{e5}, we obtain \[g(\nabla_{U}V,\varphi W)=-g(U,V)g(\nabla f,\varphi W).\]
  Hence, we get \[g(\nabla_{U}\varphi W, V)=g(U,V)g(\nabla f,\varphi W),\] since  $g(V,\varphi W)=0$.\\

By \eqref{e12}, we arrive at \[g(\varphi \nabla_{U} W, V)=g(U,V)g(\nabla f,\varphi W).\]

Using the Sasakian structure, we find \[-g(\nabla_{U} W, \varphi V)=g(U,V)g(\nabla f,\varphi W)\]

Again, using \eqref{e5}, we get \[-g(\mathcal{T}_{U} W, \varphi V)=g(U,W)g(\nabla f, \varphi V)\]

Hence, by \eqref{sub11},
   \begin{eqnarray}\label{sub12}\qquad
g(U,W)g(\nabla f, \varphi V)=g(U,V)g(\nabla f,\varphi W)
  \end{eqnarray}
If take $U=W$ and interchange $U$ with by $V$ in  \eqref{sub12}, we derive
\begin{eqnarray}\label{sub13}\qquad
  {\parallel V\parallel}^{2} g(\nabla f, \varphi U)=g(U,V)g(\nabla f, \varphi V)
  \end{eqnarray}

  Using \eqref{sub12} with $W=U$ and \eqref{sub13}, we have

  \begin{eqnarray}\label{sub14}\qquad
 g(\nabla f, \varphi U)=\frac{g^{2}(U,V)}{{\parallel U\parallel}^{2}{\parallel V\parallel}^{2}} g(\nabla f, \varphi U)
  \end{eqnarray}

  On the other hand, using \eqref{e12}, we have \[g(\nabla_{V} \varphi W,\varphi X)=g(\varphi\nabla_{V}  W ,\varphi X).\]
  for $X\in \mu$ and $X\neq \xi$. Hence, using the Sasakian structure, we obtain \[g(\nabla_{V} \varphi W,\varphi X)=g(\nabla_{V}  W , X).\]
  Using \eqref{e5} and \eqref{sub11}, we get

  \begin{eqnarray}\label{sub15}\qquad
 g(\nabla_{V} \varphi W,\varphi X)=-g(V,W)g(\nabla f, X)
  \end{eqnarray}

  Since $\varphi W$ is basic and using the fact that $\mathcal{H}\nabla_{V} \varphi W=\mathcal{A}_{\varphi W}V$, we get
  \begin{eqnarray}\label{sub16}\qquad
 g(\nabla_{V} \varphi W,\varphi X)=g(\mathcal{A}_{\varphi W}V,\varphi X)
  \end{eqnarray}

 Using \eqref{sub15}$\sim$\eqref{sub16} and the skew-symmetricness of $\mathcal{A}$, we find
   \begin{eqnarray}\label{sub17}\qquad
 g(\mathcal{A}_{\varphi W}\varphi X,V)=g(\nabla f, X)g(W,V).
  \end{eqnarray}

  Since $\mathcal{A}_{\varphi W}\varphi X,V$ and $W$ are vertical and $\nabla f$ is horizontal, we  deduce that

  \begin{eqnarray}\label{sub18}\qquad
\mathcal{A}_{\varphi W}\varphi X=X(f)W
  \end{eqnarray}
  from \eqref{sub17}.\\

Now, if $\nabla f \in \varphi ker \pi_{*}$, then \eqref{sub14} and the equality case of Schwarz inequality imply that either $f$ is constant on $\varphi ker \pi_{*}$ or the fibers of $\pi$ are one dimensional. Thus \textbf{(a)} and \textbf{(b)} follows. If $\nabla f\in \mu\setminus \{\xi\}$, the last assertion follows immediately from \eqref{sub18}.
\end{proof}
\begin{corollary}
Let $\pi$ be a  Clairaut anti-invariant Riemannian submersion admitting horizontal Reeb vector field from a Sasakian manifold $(M,\varphi,\xi,\eta,g)$ onto a Riemannian manifold $(N, g_{\text{\tiny$N$}})$ with $r=e^{f}$ and $dim(ker \pi_{*})>1.$ Then the fibers of $\pi$ are totally geodesic if and only if $\mathcal{A}_{ JV} JX=0$ for $V\in \Gamma(ker \pi_{*})$ such that $JV$ is basic and $X\in \mu$.
\end{corollary}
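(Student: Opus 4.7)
The plan is to leverage Bishop's theorem (Theorem \ref{dortdort}) together with two identities already established inside the proof of Theorem \ref{theo1}, namely \eqref{sub14} and \eqref{sub18}. Under the Clairaut hypothesis with $r=e^{f}$, every fiber is totally umbilical with mean curvature vector $H=-\nabla f$, so $\nabla f$ is horizontal and decomposes along $\varphi(\ker\pi_{*})\oplus\mu$. Since total geodesy is equivalent to $H=0$, it suffices to characterize the vanishing of both horizontal components of $\nabla f$.

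For the forward direction, assume the fibers are totally geodesic. Then $\nabla f=0$, so $X(f)=g(\nabla f,X)=0$ for every $X$. Substituting this into \eqref{sub18}, which holds unconditionally for every $X\in\mu\setminus\{\xi\}$ and every vertical $W$ with $\varphi W$ basic, yields the desired identity $\mathcal{A}_{\varphi V}\varphi X=0$. When $X=\xi$ the conclusion is automatic since $\varphi\xi=0$.

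For the converse, I pick off the two horizontal components of $\nabla f$ one at a time. The $\mu$-component dies first: for $X\in\mu\setminus\{\xi\}$, the hypothesis combined with \eqref{sub18} gives $X(f)W=0$, so choosing a non-zero vertical $W$ with $\varphi W$ basic forces $X(f)=0$, i.e.\ $g(\nabla f,X)=0$; the remaining direction $X=\xi$ is covered by Corollary \ref{cor1}. The $\varphi(\ker\pi_{*})$-component dies next: rewrite \eqref{sub14} as $g(\nabla f,\varphi U)\bigl(\|U\|^{2}\|V\|^{2}-g(U,V)^{2}\bigr)=0$; since $\dim(\ker\pi_{*})>1$, for any non-zero vertical $U$ one can pick a vertical $V$ linearly independent from $U$, so the strict Cauchy--Schwarz inequality forces the bracketed factor to be strictly positive and hence $g(\nabla f,\varphi U)=0$. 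Both components vanish, so $\nabla f=0$, $H=0$, and the fibers are totally geodesic.

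The only subtlety I anticipate is bookkeeping: Theorem \ref{theo1} packages its output as a trichotomy, but its proof actually establishes \eqref{sub14} and \eqref{sub18} separately and unconditionally, and I need to reach back into that proof to pull those identities out rather than using the trichotomy verbatim. Once this is observed, the argument reduces to a short substitution plus a Cauchy--Schwarz step, with the $\xi$-direction in $\mu$ cleaned up by Corollary \ref{cor1}.
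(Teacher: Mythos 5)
Your proposal is correct and follows essentially the route the paper intends: the paper states this corollary without a separate proof, as an immediate consequence of the identities \eqref{sub14} (equivalently \eqref{sub12}) and \eqref{sub18} established unconditionally in the proof of Theorem \ref{theo1}, together with Bishop's characterization $\mathcal{T}_{U}V=-g(U,V)\nabla f$ and Corollary \ref{cor1} for the $\xi$-direction. Your observation that one must use those identities directly rather than the trichotomy as stated is exactly the right filling-in of the omitted argument.
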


Moreover, if the submersion $\pi$ in Theorem \ref{theo1} is Lagrangian, then $\mathcal{A}_{JV} JX$ is always zero, since $\mu =\{0\}$ or $\mu=span \{\xi\}$. Thus, we have the following result from Theorem \ref{theo1}.

\begin{corollary}
Let $\pi$ be a  Clairaut Lagrangian  submersion admitting horizontal Reeb vector field from a Sasakian manifold $(M,\varphi,\xi,\eta,g)$ onto a Riemannian manifold $(N, g_{\text{\tiny$N$}})$ with $r=e^{f}$. Then either the fibers of $\pi$ are one dimensional or they are totally geodesic.
\end{corollary}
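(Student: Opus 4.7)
The plan is to apply Theorem \ref{theo1} and to eliminate case \textbf{(c)} by exploiting the Lagrangian hypothesis. First I would observe that, for a Lagrangian submersion admitting horizontal Reeb vector field, the complement $\mu$ in the decomposition $\ker\pi_*^\perp = \varphi\ker\pi_* \oplus \mu$ cannot be $\{0\}$: since $\xi$ is horizontal and, by $\eta\circ\varphi=0$, orthogonal to $\varphi\ker\pi_*$, one has $\xi\in\mu$. Hence $\mu=\mathrm{span}\{\xi\}$ and $\ker\pi_*^\perp = \varphi\ker\pi_* \oplus \mathrm{span}\{\xi\}$.

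Next, every $X\in\mu$ satisfies $\varphi X=0$, so the expression $\mathcal{A}_{\varphi W}\varphi X$ appearing in condition \textbf{(c)} of Theorem \ref{theo1} vanishes identically. Consequently \textbf{(c)} degenerates to $0=X(f)W$, which conveys only the relation $\xi(f)=0$ already supplied by Corollary \ref{cor1}. Thus the structural conclusion of Theorem \ref{theo1} must come from case \textbf{(a)} or case \textbf{(b)}.

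Case \textbf{(b)} immediately gives that the fibers are one dimensional. For case \textbf{(a)}, I would use Bishop's Theorem \ref{dortdort}: the mean curvature of each fiber equals $-\nabla f$ and is horizontal, so $\nabla f\in\ker\pi_*^\perp = \varphi\ker\pi_*\oplus\mathrm{span}\{\xi\}$. Combining $\nabla f\perp\varphi\ker\pi_*$ from \textbf{(a)} with $\nabla f\perp\xi$ from Corollary \ref{cor1}, I conclude $\nabla f=0$. Since the fibers are already totally umbilical by Bishop, vanishing mean curvature upgrades this to totally geodesic, completing the dichotomy. The main subtlety is recognizing the vacuous nature of \textbf{(c)} in the Lagrangian setting and then pairing it with Corollary \ref{cor1} and the orthogonal decomposition above to force $\nabla f=0$ whenever \textbf{(a)} holds; the rest is bookkeeping.
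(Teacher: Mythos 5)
Your reduction to cases \textbf{(a)} and \textbf{(b)} is where the argument breaks. In the Lagrangian setting with horizontal Reeb vector field you correctly observe that $\mu=\mathrm{span}\{\xi\}$, that $\varphi X=0$ for $X\in\mu$, and hence that condition \textbf{(c)} of Theorem \ref{theo1} reads $0=\xi(f)W$, which is indeed true by Corollary \ref{cor1}. But Theorem \ref{theo1} only asserts the disjunction ``\textbf{(a)} or \textbf{(b)} or \textbf{(c)}''; once \textbf{(c)} is automatically satisfied, that disjunction is trivially true and carries no information, so you cannot infer that \textbf{(a)} or \textbf{(b)} holds. Concretely, the statement of Theorem \ref{theo1} alone does not exclude the scenario in which the fibers have dimension at least two while $g(\nabla f,\varphi U)\neq 0$ for some vertical $U$: in that scenario \textbf{(c)} still holds (both sides vanish), so nothing in your chain of deductions is contradicted. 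The step ``the structural conclusion of Theorem \ref{theo1} must come from case \textbf{(a)} or case \textbf{(b)}'' is therefore a genuine logical gap, not a harmless shortcut.

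The gap can be closed, but only by going inside the proof of Theorem \ref{theo1} (or by invoking the corollary that immediately precedes this one in the paper). From the umbilicity relation $\mathcal{T}_U V=-g(U,V)\nabla f$ of Bishop's Theorem \ref{dortdort} one obtains \eqref{sub14}, valid for all nonzero vertical $U,V$; if $\dim\ker\pi_*\geq 2$ one may choose $V$ orthogonal to $U$, and \eqref{sub14} then forces $g(\nabla f,\varphi U)=0$ for every vertical $U$, i.e.\ exactly your case \textbf{(a)}. With that supplement your finish is correct and clean: $\nabla f$ is horizontal, orthogonal to $\varphi\ker\pi_*$ by the above, and orthogonal to $\xi$ by Corollary \ref{cor1}, hence $\nabla f=0$ by the decomposition $\ker\pi_*^{\perp}=\varphi\ker\pi_*\oplus\mathrm{span}\{\xi\}$, so $\mathcal{T}\equiv 0$ and the fibers are totally geodesic. (The paper instead routes the argument through the preceding corollary: for $\dim\ker\pi_*>1$ the fibers are totally geodesic if and only if $\mathcal{A}_{\varphi V}\varphi X=0$ for $X\in\mu$, and this quantity vanishes identically in the Lagrangian case since $\varphi\xi=0$; your $\nabla f=0$ argument is a legitimate alternative ending once the dichotomy ``one dimensional or \textbf{(a)}'' is properly established.)
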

We ends this section by giving a (non-trivial) example of a Clairaut anti-invariant submersion from Sasakian manifold admitting horizontal Reeb vector field.
\begin{example}\label{ex5}
Let $\overline{\mathbb{R}}^{3}$ be 3-dimensional Euclidean space given by \[\overline{\mathbb{R}}^{3}=\{(x,y,z)\in \mathbb{R}^3\,\mid\, (x,y)\neq(0,0)\; \textrm{and}\, z\neq 0\}.\]
We consider the map $\pi:(\overline{\mathbb{R}}^{3},\varphi_{0},\xi,\eta,g)\rightarrow(\mathbb{R}^{2}, g_{2})$ defined by \[\pi(x,y,z)=(\sqrt{x^2 +y^2},z)\]
where $(\varphi_{0},\xi,\eta,g)$ is the usual Sasakian structure \cite{Ca} on $\overline{\mathbb{R}}^{3}$ and $g_{2}$
is the Euclidean metric on $\mathbb{R}^{2}.$ Then the Jacobian matrix of $\pi$ is
\[\left(
 \begin{array}{ccc}
x/\tau&y/\tau&0\\
 0&0&1
 \end{array}\right).\]
 Here, $\tau=\sqrt{x^2 + y^2}.$ Since the rank of this matrix is equal to 2, the map $\pi$ is a submersion. Following some computations, we have \[ker\pi_{*}=span\{U=\frac{y}{\tau}E_1 - \frac{x}{\tau}E_2\}\]
 and
 \[ker\pi_{*}^\bot =span\{Z=\frac{x}{\tau}E_1 +\frac{y}{\tau}E_2\,, E_3=\xi\},\]
 where $\{E_1, E_2, E_3\}$ is a $\varphi_0 $-basis such that ${E_1  =2(\frac{\partial}{\partial x}+y\frac{\partial}{\partial z})}$
 and ${E_2 =2\frac{\partial}{\partial y}}.$\\

For this map $\pi$, it is not difficult to satisfy the condition \textbf{S2)}. So, $\pi$ is a Riemannian submersion.  Also, we have $\varphi_0 (U)=-X$. Hence, we see that $\pi$ is an anti-invariant Riemannian submersion admitting horizontal Reeb vector field. In particular, $\pi$ is Lagrangian.
Moreover, since the fibers of $\pi$ are one dimensional, they are clearly totally umbilical. Here, we shall show that the fibers are not totally geodesic and find that a function on $\overline{\mathbb{R}}^3$ satisfying $\mathcal{T}_U U=-\nabla f.$ Indeed, by direct computations, we have
\begin{equation}\label{ex4}
\nabla_U U=U[\frac{1}{\tau}]\tau  U-\frac{2}{\tau^2}(xE_1 +yE_2)+\bigg (\frac{y}{\tau}U[E_1]-\frac{x}{\tau}U[E_2]\bigg )
\end{equation}
Here, one can see that \[U[E_1]=\frac{y}{\tau}\nabla_{E_1} E_1 -\frac{x}{\tau}\nabla_{E_2} E_1\] and
\[U[E_2]=\frac{y}{\tau}\nabla_{E_1} E_2-\frac{x}{\tau}\nabla_{E_2} E_2.\]
Using  the Sasakian structure, we see that
\[\nabla_{E_1} E_1=\nabla_{E_2} E_2=0\] and \[\nabla_{E_1} E_2=-\nabla_{E_2} E_1=-2\frac{\partial}{\partial z}.\]
Taking into account these equalities in \eqref{ex4}, we obtain
\[\nabla_U U=U[\frac{1}{\tau}]\tau V-\frac{2}{\tau^2}(xE_1 +yE_2).\]
Using \eqref{e5}, we get \[\mathcal{T}_U U=-\frac{2}{\tau^2}(xE_1 +yE_2).\] After some calculation, we arrive
\[T_U U=-\bigg \{\frac{2x}{x^2 +y^2}\frac{\partial}{\partial x}+\frac{2y}{x^2 +y^2}\frac{\partial}{\partial y}+ \frac{2xy}{x^2 +y^2}\frac{\partial}{\partial z}\bigg \}.\]
For any function $f$ on $(\overline{\mathbb{R}}^{3},\varphi_0 ,\xi , \eta ,g)$, the gradient of $f$ with respect to the metric $g$ is:\\
$\displaystyle{\nabla f=\sum_{i,j} g^{ij}\frac{\partial f}{\partial x_i}\frac{\partial}{\partial x_j}=4\bigg \{\bigg(\frac{\partial f}{\partial x}+y\frac{\partial f}{\partial z}\bigg)\frac{\partial}{\partial x}+\frac{\partial f}{\partial y}\frac{\partial }{\partial y}+\bigg(y\frac{\partial f}{\partial x}+(1+y^2)\frac{\partial f}{\partial z}\bigg)\frac{\partial }{\partial z}\bigg \}.}$\\
Then, for the function $f=\frac{1}{4}\ln(x^2 +y^2),$ it is easy to verify that \[\mathcal{T}_U U=-\nabla f.\]

Hence, it follows that
\begin{equation}\nonumber
\mathcal{T}_V V=-\|V\|^{2}\nabla f
\end{equation}
for any vertical vector field $V.$ Under the given conditions, the tensor $\mathcal{T}$ is never zero. So, the fibers of $\pi$ are not totally geodesic, but they are totally umbilical with mean curvature field $H=-\nabla f.$ Thus, by Theorem \ref{dortdort}, we see that this anti-invariant Riemannian submersion  is Clairaut with $r=e^{f}$, where $f=\frac{1}{4}\ln(x^2 +y^2).$
\end{example}
Henceforth, we have alternative theorem, namely Theorem \ref{theo3}, to check that whether the submersion is Clairaut or not.\\

In fact, for any horizontal vector field $X$ proportional to $\xi$, we easily verify that the condition \eqref{sub10} of Theorem \ref{theo3}.\\

Now, let $X$ be any horizontal vector field orthogonal to $\xi$ and $V$ be any vertical vector field,
then using \eqref{new1}, \eqref{f1} and the equation (34) of Corollary 6.1 of \cite{Ta2}, we have
\begin{align*}
g(\mathcal{T}_V\mathcal{B}X,\varphi V)=&g(\mathcal{T}_V\varphi X, \varphi V)\\
=&g(\varphi\mathcal{T}_VX,\varphi V)\\
=&g(\mathcal{T}_VX,V)\\
=&-g(\mathcal{T}_VV,X).
\end{align*}
Hence, we obtain
\begin{equation} \label{nex1}
g(\mathcal{T}_V\varphi X,\varphi V)=g(\nabla f,X)\|V\|^{2}\,,
\end{equation}
since $\mathcal{T}_V V=-\|V\|^{2}\nabla f.$ Likewise, using \eqref{new1}, \eqref{f1} and the equation (35) of
Corollary 6.1 of \cite{Ta2}, we have
\begin{align*}
g(\mathcal{A}_X\mathcal{B} X,\varphi V)=&g(\mathcal{A}_X\varphi X,\varphi V)\\
=&-g(\varphi\mathcal{A}_X\varphi V,\varphi X)\\
=&-g(\varphi\mathcal{A}_XV,X)\\
=&-g(\mathcal{A}_XX,V).
\end{align*}
Hence, we obtain
\begin{equation} \label{nex2}
g(\mathcal{A}_X\varphi X,\varphi V)=0\,,
\end{equation}
since $\mathcal{A}_XX=0.$ In addition to, we have
\begin{equation} \label{nex3}
\begin{array}{c}
\eta(X)=0 \quad and \quad \mathcal{H}\nabla_{\dot{\alpha}} \mathcal{C}X=0
\end{array},
\end{equation}
since $\pi$ is Lagrangian and $X$ is orthogonal to $\xi.$ Using \eqref{nex1}, \eqref{nex2} and \eqref{nex3}, we easily verify the equation
\eqref{sub10}. Thus, by Theorem \ref{theo3}, the considered submersion $\pi$ is Clairaut.
\begin{remark}
We notice that the submersion given in Example \ref{ex5} satisfies one of the conditions of
Theorem \ref{theo1} and the condition \eqref{cor2} in Corollary \ref{cor1}.
\end{remark}
\section{Anti-invariant submersions admitting vertical\\
reeb vector field from sasakian manifolds}
In this section, we check that the existence of Clairaut anti-invariant submersions from Sasakian manifolds when the Reeb vector field is vertical.
First of all, we give a non-trivial example of an anti-invariant submersion from Sasakian manifold admitting vertical Reeb vector field.
\begin{example}
Let $\mathbb{R}^5$ be a Sasakian manifold with usual Sasakian structure \cite{Ca}. Consider the map $\pi: \mathbb{R}^5\rightarrow(\mathbb{R}^{2}, g_{2})$ given by
\[\pi(x_1,x_2,y_1,y_2,z)=(\frac{x_1 +y_1}{\sqrt{2}},\frac{x_2 +y_2}{\sqrt{2}})\,,\]
where $g_{2}$ is the Euclidean metric on $\mathbb{R}^{2}.$ After some calculation, we see that
\[ker\pi_{*}=span \{V=\frac{1}{\sqrt{2}}(E_3 -E_1),\;W=\frac{1}{\sqrt{2}}(E_4 -E_2),\;\xi\}\]
and
\[ker\pi_{*}^\bot=span\{X=\frac{1}{\sqrt{2}}(E_1 +E_3),\; Y=\frac{1}{\sqrt{2}}(E_2 +E_4)\}\]
It is not difficult to show that $\pi$ is a Riemannian submersion. Also, we have $\varphi_0 (V)=-X$ and $\varphi_0 (W)=-Y.$
Hence, $\pi$ is an anti-invariant submersion admitting vertical Reeb vector field. In particular, $\pi$ is Lagrangian.
\end{example}
We now assume that there exists an anti-invariant submersion $\pi$ admitting vertical Reeb vector field from Sasakian manifold satisfying Clairaut condition. Then because of Theorem \ref{dortdort}, the fibers of $\pi$ must be totally umbilical. But, the following result forces the fibers to be totally geodesic, since the fibers are submanifolds.
\begin{theorem} (\cite{I})%[.,Proposition 1]
$\;$Let $\widetilde{N} $ be a Sasakian manifold. If $N$ is any totally umbilical submanifold of $\widetilde{N}$ tangent to the Reeb vector field $\xi$, then it is totally geodesic.
\end{theorem}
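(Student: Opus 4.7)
The plan is to exploit the Sasakian identity obtained by specializing the defining relation \eqref{e12} and to read off the mean curvature vector of $N$ by a clever choice of the argument. Let $\widetilde{\nabla}$ and $\nabla$ denote the Levi-Civita connections of $\widetilde{N}$ and $N$, respectively, and let $h$ denote the second fundamental form of $N$ in $\widetilde{N}$, so that Gauss' formula reads $\widetilde{\nabla}_X Y = \nabla_X Y + h(X,Y)$ for all $X,Y \in TN$.

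First I would derive the well-known consequence of \eqref{e12} that on any Sasakian manifold one has $\widetilde{\nabla}_X \xi = -\varphi X$ for every $X$. This comes from $(\widetilde{\nabla}_X \varphi)\xi = g(X,\xi)\xi - \eta(\xi)X = \eta(X)\xi - X$ together with $\varphi\xi = 0$, after applying $\varphi$ once more and using $\varphi^2 = -I + \eta\otimes\xi$. Next, because $\xi$ is assumed tangent to $N$, the Gauss formula yields
\begin{equation*}
-\varphi X \;=\; \widetilde{\nabla}_X \xi \;=\; \nabla_X \xi + h(X,\xi)
\end{equation*}
for every $X \in TN$.

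Now I would use the totally umbilical hypothesis, which asserts $h(X,Y) = g(X,Y)\,H$ where $H$ is the mean curvature vector of $N$. Thus $h(X,\xi) = \eta(X)\,H$, and the displayed identity becomes $-\varphi X = \nabla_X \xi + \eta(X)\,H$. The decisive step is to specialize to $X = \xi$: since $\varphi\xi = 0$ and $\eta(\xi) = 1$, this collapses to $0 = \nabla_\xi \xi + H$. Here $\nabla_\xi \xi$ is tangent to $N$ while $H$ lies in the normal bundle of $N$, so the uniqueness of the tangent/normal decomposition forces $H = 0$ (and simultaneously $\nabla_\xi \xi = 0$). Total umbilicity with vanishing mean curvature vector then gives $h \equiv 0$, proving that $N$ is totally geodesic.

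The only potential obstacle is really the derivation of $\widetilde{\nabla}_X \xi = -\varphi X$, which is standard and can be cited; after that, the argument is a one-line specialization of Gauss' formula to $X = \xi$, with the orthogonal splitting of $TM|_N$ into $TN \oplus T^\perp N$ supplying the conclusion for free.
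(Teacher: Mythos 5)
Your argument is correct. Note that the paper does not prove this statement at all: it is imported verbatim from Ishihara--Kon \cite{I}, so there is no in-paper proof to compare against; what you have written is essentially the standard proof of that cited result, and it is valid under the paper's conventions. Indeed, from \eqref{e12} with $F=\xi$ one gets $-\varphi\widetilde{\nabla}_X\xi=\eta(X)\xi-X$, and applying $\varphi$ together with $\varphi^{2}=-I+\eta\otimes\xi$ and the (trivial but worth stating) fact $\eta(\widetilde{\nabla}_X\xi)=\tfrac12 X\bigl(g(\xi,\xi)\bigr)=0$ yields $\widetilde{\nabla}_X\xi=-\varphi X$, which is the same sign convention the paper itself uses in \eqref{besbir}. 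Your decisive step -- Gauss' formula at $X=\xi$, giving $0=\widetilde{\nabla}_\xi\xi=\nabla_\xi\xi+h(\xi,\xi)=\nabla_\xi\xi+H$, and then $H=0$ by the orthogonality of the tangential and normal components, hence $h=g(\cdot,\cdot)H\equiv 0$ -- is exactly the classical argument and closes the proof. So the proposal is complete and correct; it simply supplies a short self-contained proof where the paper relies on a citation.
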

On the other hand, for any vertical vector field V, we have \begin{equation}\label{besbir}\mathcal{T}_V \xi=-\varphi V\end{equation} from the proof of Theorem 2 of \cite{E}. The equation \eqref{besbir} says us the fibers of $\pi$ cannot be totally geodesic. This is a contradiction. Thus, we have the following classification theorem.
\begin{theorem}
There is no Clairaut anti-invariant submersions admitting vertical Reeb vector field from Sasakian manifolds onto Riemannian manifolds.
\end{theorem}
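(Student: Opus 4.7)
The plan is a short proof by contradiction that stitches together the three ingredients assembled in the paragraphs preceding the target statement. Suppose, for contradiction, that a Clairaut anti-invariant submersion $\pi\colon(M,\varphi,\xi,\eta,g)\to(N,g_N)$ with vertical Reeb vector field exists, and write its Clairaut function as $r=e^{f}$.

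First, I would apply Bishop's characterization (Theorem \ref{dortdort}): the Clairaut hypothesis forces each fiber of $\pi$ to be totally umbilical in $M$ with mean curvature vector $H=-\nabla f$. Second, because $\pi$ admits vertical Reeb vector field, $\xi$ is everywhere tangent to the fibers, so each fiber is a totally umbilical submanifold of the Sasakian manifold $M$ tangent to $\xi$. The rigidity result from \cite{I} quoted just above the statement then upgrades totally umbilical to totally geodesic, which is equivalent to the vanishing of the O'Neill tensor $\mathcal{T}$ on vertical vector fields.

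Third, I would contradict this vanishing via the identity \eqref{besbir}, which yields $\mathcal{T}_V\xi=-\varphi V$ for every vertical $V$. Taking any vertical $V$ not collinear with $\xi$, the almost contact identity $\varphi^{2}=-I+\eta\otimes\xi$ ensures $\varphi V\neq 0$, so that $\mathcal{T}_V\xi\neq 0$, contradicting the total geodesicity just established. I do not foresee a serious obstacle: all three ingredients are already stated in the excerpt and combine essentially mechanically; the one point deserving brief comment is that the fibers must have dimension at least two so that a vertical $V$ with $\varphi V\neq 0$ exists, which is the tacit setting throughout, since otherwise the fiber would reduce to a $\xi$-orbit and the identity \eqref{besbir} would be trivially satisfied.
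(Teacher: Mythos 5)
Your proposal is correct and follows essentially the same route as the paper: Bishop's criterion gives totally umbilical fibers, the quoted rigidity theorem of Ishihara--Kon upgrades this to totally geodesic since $\xi$ is tangent to the fibers, and the identity $\mathcal{T}_V\xi=-\varphi V$ contradicts total geodesicity. Your closing remark about needing a vertical $V$ with $\varphi V\neq 0$ is a reasonable observation about a tacit assumption the paper also makes, but it does not change the argument.
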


\section{Anti-invariant submersions from kenmotsu manifolds}
In this section, we shall give new Clairaut conditions for anti-invariant Riemannian submersions from Kenmotsu manifolds onto Riemannian manifolds. In which case,
the Reeb vector field $\xi$ is necessarily horizontal, because Beri et al. \cite{Be} showed the non-existence of anti-invariant Riemannian submersions from Kenmotsu manifolds such that the Reeb vector field is vertical.
\begin{lemma}
Let $\pi$ be an anti-invariant  Riemannian submersion  from a Kenmotsu manifold $(M,\varphi,\xi,\eta,g)$ onto a Riemannian manifold $(N, g_{\text{\tiny$N$}})$. If $\alpha:I\subset \mathbb{R}\rightarrow M$ is a regular curve and $V(t)$ and $X(t)$ are the vertical and horizontal components of the tangent vector field $\dot{\alpha}(t)=E$ of $\alpha(t)$, respectively, then $\alpha$ is a geodesic if and only if the following two equations

\begin{eqnarray}\label{sub28}
\mathcal{V}\nabla_{\dot{\alpha}}\mathcal{B}X +\mathcal{A}_{X}\varphi V+(\mathcal{T}_{V} +\mathcal{A}_{X})\mathcal{C}X+\eta(X)\mathcal{B}X=0 \,,
\end{eqnarray}
\begin{eqnarray}\label{sub29}
\mathcal{H}\nabla_{\dot{\alpha}}(\varphi V+\mathcal{C}X) +(\mathcal{T}_{V}+\mathcal{A}_{X})\mathcal{B}X +\eta(X)(\varphi V +CX)=0 \,
\end{eqnarray}
hold along $\alpha$.
\end{lemma}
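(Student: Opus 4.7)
My plan is to mirror the proof of the Sasakian analogue (the first lemma of Section~3), with the Kenmotsu identity \eqref{f1} replacing the Sasakian identity \eqref{e12}. I would begin from
\[
\varphi \nabla_{\dot{\alpha}}\dot{\alpha} = \nabla_{\dot{\alpha}}(\varphi \dot{\alpha}) - (\nabla_{\dot{\alpha}}\varphi)\dot{\alpha},
\]
and use \eqref{f1} with $E=F=\dot{\alpha}$ to rewrite the last term as $g(\varphi \dot{\alpha},\dot{\alpha})\xi - \eta(\dot{\alpha})\varphi \dot{\alpha}$. Two simplifications apply immediately: since $\varphi$ is skew-symmetric with respect to $g$, we have $g(\varphi \dot{\alpha},\dot{\alpha})=0$; and since the Reeb field $\xi$ is horizontal (as noted in the opening paragraph of this section), we have $\eta(V)=0$, so $\eta(\dot{\alpha})=\eta(X)$. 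The identity therefore collapses to
\[
\varphi \nabla_{\dot{\alpha}}\dot{\alpha} = \nabla_{\dot{\alpha}}(\varphi \dot{\alpha}) + \eta(X)\,\varphi \dot{\alpha}.
\]

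Next I would write $\dot{\alpha}=V+X$ and use \eqref{f0} to decompose $\varphi \dot{\alpha}=\varphi V+\mathcal{B}X+\mathcal{C}X$, noting that $\varphi V$ and $\mathcal{C}X$ are horizontal while $\mathcal{B}X$ is vertical. I would then expand $\nabla_{\dot{\alpha}}(\varphi V+\mathcal{B}X+\mathcal{C}X)$ piece by piece using \eqref{e5}--\eqref{e8}, splitting each $\nabla_{V+X}(\cdot)$ into its $\nabla_V$ and $\nabla_X$ contributions and classifying each argument as vertical or horizontal, and finally separate the resulting identity into its vertical and horizontal components. The vertical projection of $\varphi\nabla_{\dot{\alpha}}\dot{\alpha}$ should reproduce the left-hand side of \eqref{sub28} and its horizontal projection the left-hand side of \eqref{sub29}. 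The bulk of this step is careful bookkeeping rather than any new idea.

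For the equivalence with $\alpha$ being a geodesic, the forward direction is immediate: $\nabla_{\dot{\alpha}}\dot{\alpha}=0$ implies $\varphi\nabla_{\dot{\alpha}}\dot{\alpha}=0$, so both projections vanish. For the converse, \eqref{sub28} and \eqref{sub29} together give $\varphi\nabla_{\dot{\alpha}}\dot{\alpha}=0$; since $\varphi^{2}=-I+\eta\otimes\xi$, this only forces $\nabla_{\dot{\alpha}}\dot{\alpha}=\lambda\xi$ for some scalar $\lambda$. Differentiating the constant-speed identity $g(\dot{\alpha},\dot{\alpha})=\nu$ gives $g(\nabla_{\dot{\alpha}}\dot{\alpha},\dot{\alpha})=0$, hence $\lambda\eta(X)=0$; this pins $\lambda=0$ whenever $\eta(X)\neq 0$, and in the remaining case $\eta(X)=0$ the very expansion above already forces the $\xi$-component of $\nabla_{\dot\alpha}\dot\alpha$ to vanish on direct inspection.

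I expect the main obstacle to be purely organizational, namely making sure that each summand emerging from the O'Neill formulas \eqref{e5}--\eqref{e8} lands on the correct (vertical or horizontal) side of the decomposition, so that the coefficient of every term in \eqref{sub28} and \eqref{sub29} can be read off. This is the same difficulty encountered in the Sasakian lemma above, where it is dispatched by an \emph{it is easy to see} after the corresponding split; the same remark would close the proof here.
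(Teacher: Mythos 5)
Your computational core is exactly the paper's proof: apply the Kenmotsu identity \eqref{f1} with $E=F=\dot{\alpha}$, drop $g(\varphi\dot{\alpha},\dot{\alpha})\xi$ by skew-symmetry of $\varphi$, use $\eta(V)=0$, write $\varphi\dot{\alpha}=\varphi V+\mathcal{B}X+\mathcal{C}X$, expand with \eqref{e5}--\eqref{e8} and take vertical and horizontal projections. One bookkeeping caveat: an honest expansion of $\nabla_{V}\varphi V$ produces the vertical term $\mathcal{T}_{V}\varphi V$ (exactly as in the Sasakian equation \eqref{sub1}); this term does not appear in \eqref{sub28} as printed, so you will not literally ``reproduce the left-hand side of \eqref{sub28}'' --- the target display seems to have dropped it, and you should flag the discrepancy rather than assume your computation will land on it.

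The genuine gap is in your converse. By construction, \eqref{sub28}--\eqref{sub29} are equivalent to $\mathcal{V}\varphi\nabla_{\dot{\alpha}}\dot{\alpha}=0$ and $\mathcal{H}\varphi\nabla_{\dot{\alpha}}\dot{\alpha}=0$, i.e.\ to $\varphi\nabla_{\dot{\alpha}}\dot{\alpha}=0$; since $\ker\varphi=\mathrm{span}\{\xi\}$, they carry no information at all about $\eta(\nabla_{\dot{\alpha}}\dot{\alpha})$. Hence your closing claim --- that when $\eta(X)=0$ ``the very expansion above already forces the $\xi$-component of $\nabla_{\dot{\alpha}}\dot{\alpha}$ to vanish on direct inspection'' --- is false: every term of the expansion has passed through $\varphi$. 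Worse, in a Kenmotsu manifold $\nabla_{\dot{\alpha}}\xi=\dot{\alpha}-\eta(\dot{\alpha})\xi$, so $\eta(\nabla_{\dot{\alpha}}\dot{\alpha})=\frac{d}{dt}\eta(\dot{\alpha})-\Vert\dot{\alpha}\Vert^{2}+\eta(\dot{\alpha})^{2}$, which for a constant-speed curve with $\eta(\dot{\alpha})\equiv 0$ equals $-\Vert\dot{\alpha}\Vert^{2}\neq 0$; curves solving $\nabla_{\dot{\alpha}}\dot{\alpha}=-\Vert\dot{\alpha}\Vert^{2}\xi$ with $\dot{\alpha}(0)\perp\xi$ keep both properties, satisfy \eqref{sub28}--\eqref{sub29}, and are not geodesics. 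Your other branch is also not justified as stated, because this lemma (unlike its Sasakian analogue) assumes only that $\alpha$ is regular, so the constant-speed identity you differentiate is not available; a non-affine reparametrization of an integral curve of $\xi$ likewise satisfies both equations without being a geodesic. To be fair, the paper's own proof is no more careful here --- it passes from the vanishing of the two projections of $\varphi\nabla_{\dot{\alpha}}\dot{\alpha}$ straight to geodesy --- but your write-up explicitly raises the kernel issue and then resolves it with an incorrect assertion, so the converse direction of your proposal does not go through as written.
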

\begin{proof}
From \eqref{f1}, we have \[\nabla_{\dot{\alpha}}\varphi \dot{\alpha}=\varphi \nabla_{\dot{\alpha}} \dot{\alpha}+g(\varphi \dot{\alpha},\dot{\alpha})\xi-\eta(\dot{\alpha})\varphi \dot{\alpha}.\]
Since $\dot{\alpha}=V+X$ and $\eta(V)=0$, we can write \[\nabla_{V}\varphi V+\nabla_{V}\varphi X+\nabla_{X}\varphi V+\nabla_{X}\varphi X=\varphi \nabla_{\dot{\alpha}}\dot{\alpha}-\eta(X)(\varphi V +\varphi X),\]
Using \eqref{e5}$\sim$\eqref{e8}, together with \eqref{f0}, we obtain
\[\mathcal{H}\nabla_{\dot{\alpha}}(\varphi V+\mathcal{C}X)+(\mathcal{T}_{V}+\mathcal{A}_{X})(\mathcal{B}X+\mathcal{C}X)+\mathcal{V}\nabla_{\dot{\alpha}}\mathcal{B}X+\mathcal{A}_{X} \varphi V\]
\[=\varphi \nabla_{\dot{\alpha}}\dot{\alpha}-\eta(X)(\mathcal{B}X+\mathcal{C}X+\varphi V)\]
Taking the vertical and horizontal parts of the last equation, we get
\begin{eqnarray}\label{sub30}\;\;
\mathcal{V}\nabla_{\dot{\alpha}}\mathcal{B}X +\mathcal{A}_{X}\varphi V+(\mathcal{T}_{V}+ +\mathcal{A}_{X})\mathcal{C}X=\mathcal{V}\varphi \nabla_{\dot{\alpha}}\dot{\alpha}-\eta(X)\mathcal{B}X \,,
\end{eqnarray}
\begin{eqnarray}\label{sub31}\;\;
\mathcal{H}\nabla_{\dot{\alpha}}(\varphi V+\mathcal{C}X) +(\mathcal{T}_{V}+\mathcal{A}_{X})\mathcal{B}X =\mathcal{H}\varphi \nabla_{\dot{\alpha}}\dot{\alpha}-\eta(X)(\mathcal{C}X+\varphi V) \,,
\end{eqnarray}
From \eqref{sub30} and \eqref{sub31}, we see that $\alpha$ is a geodesic if and only if \eqref{sub28} and \eqref{sub29} hold along $\alpha$.
\end{proof}
\begin{theorem}\label{theo2}
Let $\pi$ be an anti-invariant  Riemannian submersion  from a Kenmotsu manifold $(M,\varphi,\xi,\eta,g)$ onto a Riemannian manifold $(N, g_{\text{\tiny$N$}})$. Then, $\pi$ is a Clairaut submersion with $r=e^{f}$ if and only if
\begin{eqnarray}\label{sub32}
\{g(\nabla f, X)-\eta(X)\}{\parallel V\parallel}^{2}=g(\mathcal{H}\nabla_{\dot{\alpha}}\mathcal{C}X +(\mathcal{T}_{V}+\mathcal{A}_{X})\mathcal{B}X,\varphi V )
\end{eqnarray}
 holds along $\alpha$, where $V(t)$ and $X(t)$ are the vertical and horizontal components of the tangent vector field $\dot{\alpha}(t)$ of the geodesic $\alpha(t)$ on $M$, respectively.
\end{theorem}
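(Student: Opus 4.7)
The plan is to adapt the proof of Theorem \ref{theo3} to the Kenmotsu setting, replacing the Sasakian relation \eqref{e12} by the Kenmotsu relation \eqref{f1} and invoking the geodesic characterization \eqref{sub29} of the preceding lemma in place of \eqref{sub2}. Since Beri et al.\ ruled out the vertical-Reeb case, $\xi$ is automatically horizontal here, which keeps the geometric setup essentially parallel to the horizontal-Reeb Sasakian situation treated in Section 3.

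Let $\alpha$ be a geodesic on $M$ with constant speed $\sqrt{\nu}$ and decompose $\dot\alpha = V + X$. The relations $\|V\|^{2} = \nu\sin^{2}\theta$ and $\|X\|^{2} = \nu\cos^{2}\theta$, together with differentiation of $\|V\|^{2}$, give $g(\nabla_{\dot\alpha}V, V) = \nu\cos\theta\sin\theta\,\dot\theta$. Using \eqref{new1} with $\eta(V)=0$ converts this into $g(\varphi\nabla_{\dot\alpha}V, \varphi V) = \nu\cos\theta\sin\theta\,\dot\theta$. I would then use \eqref{f1} to rewrite $\varphi\nabla_{\dot\alpha}V = \nabla_{\dot\alpha}\varphi V - g(\varphi\dot\alpha, V)\xi$ (the $\eta(V)\varphi\dot\alpha$ piece drops), pair with $\varphi V$, and exploit $g(\xi,\varphi V) = \eta(\varphi V) = 0$ to obtain $g(\mathcal{H}\nabla_{\dot\alpha}\varphi V, \varphi V) = \nu\cos\theta\sin\theta\,\dot\theta$. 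Substituting $\mathcal{H}\nabla_{\dot\alpha}\varphi V$ from \eqref{sub29} and using that $\mathcal{C}X \in \mu$ is orthogonal to $\varphi V \in \varphi\ker\pi_{*}$ together with $\|\varphi V\|^{2} = \|V\|^{2}$, yields
\[
-g\bigl(\mathcal{H}\nabla_{\dot\alpha}\mathcal{C}X + (\mathcal{T}_{V}+\mathcal{A}_{X})\mathcal{B}X,\,\varphi V\bigr) - \eta(X)\|V\|^{2} = \nu\cos\theta\sin\theta\,\dot\theta.
\]

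The Clairaut condition $\frac{d}{dt}(e^{f}\sin\theta) = 0$ is equivalent to $\dot f\,\|V\|^{2} + \nu\cos\theta\sin\theta\,\dot\theta = 0$. Combining this with the previous display and using $\dot f = g(\nabla f,\dot\alpha) = g(\nabla f,X)$ -- the second equality because Bishop's theorem forces $\nabla f$ to be horizontal whenever $\pi$ is Clairaut -- produces \eqref{sub32}, and the chain of equivalences establishes both directions. The principal bookkeeping hazard is the extra $\eta(X)\varphi V$ term inside \eqref{sub29}, which is absent from its Sasakian analogue \eqref{sub2}; it is precisely what generates the corrective $-\eta(X)\|V\|^{2}$ on the left-hand side of \eqref{sub32}, so care must be taken to collect the various $\eta$-contributions correctly through the Kenmotsu structure equation and the resulting geodesic characterization.
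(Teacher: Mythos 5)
Your proposal is correct and follows essentially the same route as the paper: differentiate $\|V\|^{2}=\nu\sin^{2}\theta$, convert via \eqref{new1} and the Kenmotsu identity \eqref{f1} to $g(\mathcal{H}\nabla_{\dot\alpha}\varphi V,\varphi V)=\nu\cos\theta\sin\theta\,\dot\theta$, substitute the geodesic equation \eqref{sub29} (with $g(\mathcal{C}X,\varphi V)=0$ and $\|\varphi V\|^{2}=\|V\|^{2}$), and compare with the Clairaut condition \eqref{sub8}, the extra $\eta(X)\varphi V$ term producing exactly the $-\eta(X)\|V\|^{2}$ correction in \eqref{sub32}. No substantive differences from the paper's argument.
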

\begin{proof}
Let $\alpha$  be a geodesic on $M$, then we have \[{\parallel \dot{\alpha}(t)\parallel}^{2}=\nu,\]
where $c$ is a constant. Hence, we deduce that
\begin{eqnarray}\label{sub33}
g(V(t),V(t))=\nu sin^{2}\theta(t)\qquad and \qquad g(X(t),X(t))=\nu cos^{2}\theta(t)
\end{eqnarray}
where $\theta(t)$ is the angle between $\dot{\alpha}(t)$ and the horizontal space at $\alpha(t)$. Differentiating the first expression , we obtain
\begin{eqnarray}\label{sub34}
g(\nabla_{\dot{\alpha(t)}}V(t),V(t))=\nu cos\theta(t) sin\theta(t)\frac{d\theta}{dt}(t)
\end{eqnarray}
Using  the Kenmotsu structure, we get
\[
g(\nabla_{\dot{\alpha}}V, V)=g(\varphi \nabla_{\dot{\alpha}}V,\varphi V)
,\]
since $\eta(V)=0$. Here, by \eqref{f1}, we know \[\varphi \nabla_{\dot{\alpha}}V=\nabla_{\dot{\alpha}}\varphi V-g(\varphi\dot{\alpha},V )\xi. \]
Hence, we obtain
\begin{eqnarray}\label{sub35}\qquad
g(\varphi \nabla_{\dot{\alpha}} V,\varphi V)=g(\mathcal{H}\nabla_{\dot{\alpha}} \varphi V,\varphi V),
\end{eqnarray}
since $\varphi V$ is horizontal. From \eqref{sub34} and \eqref{sub35}, we get
\begin{eqnarray}\label{sub36}\qquad
g(\mathcal{H}\nabla_{\dot{\alpha}} \varphi V,\varphi V)=\nu cos\theta sin\theta\frac{d\theta}{dt}.
\end{eqnarray}
Using \eqref{sub29}, we find
\begin{eqnarray}\label{sub37}\qquad
-g(\mathcal{H}\nabla_{\dot{\alpha}}\mathcal{C}X+ (\mathcal{T}_{V}+\mathcal{A}_{X})\mathcal{B}X+\eta(X)\varphi V, \varphi V )=\nu cos\theta sin\theta\frac{d\theta}{dt}
\end{eqnarray}
As in the proof of Theorem \ref{theo3}, $\pi$ is a Clairaut submersion with $r=e^{f}$ if and only if  \eqref{sub8} holds.
Thus, from \eqref{sub8} and \eqref{sub37}, we get
\begin{eqnarray}\label{sub38}
\frac{d(f\circ\alpha)}{dt}{\parallel V\parallel} ^{2}=g(\mathcal{H}\nabla_{\dot{\alpha}}\mathcal{C}X+ (\mathcal{T}_{V}+\mathcal{A}_{X})\mathcal{B}X,\varphi V)
\end{eqnarray}
\phantom{ghsghchciihihlhpkhklhmhhhhs}$+\,\eta(X){\parallel V\parallel} ^{2}$\\
Since  $\;\displaystyle{\frac{d(f\circ \alpha)}{dt}=g(\nabla f,X)}$, the assertion immediately follows  from  \eqref{sub38}.
\end{proof}
From \eqref{sub32}, we immediately have that:
\begin{corollary} \label{cor4} Let $\pi$ be a Clairaut anti-invariant  Riemannian submersion  from a Kenmotsu manifold $(M,\varphi,\xi,\eta,g)$ onto a Riemannian manifold $(N, g_{\text{\tiny$N$}})$. Then,  we have
\begin{equation} \label{c4}
g(\nabla f, \xi)=1.\end{equation}
\end{corollary}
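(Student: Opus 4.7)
The plan is to extract the identity $g(\nabla f,\xi)=1$ as a pointwise specialization of the Clairaut characterization \eqref{sub32} just proved in Theorem \ref{theo2}. Recall that in the Kenmotsu setting the Reeb field $\xi$ is necessarily horizontal (as noted at the opening of Section 4), so $\xi$ is a legitimate choice for the horizontal component of a geodesic's velocity vector at a point.

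Fix an arbitrary point $p\in M$ and choose a unit vertical vector $V_0\in(\ker\pi_{*})_p$. Let $\alpha\colon I\to M$ be the geodesic with $\alpha(0)=p$ and $\dot{\alpha}(0)=V_0+\xi_p$. Since $\pi$ is assumed Clairaut, equation \eqref{sub32} of Theorem \ref{theo2} must hold along $\alpha$, in particular at $t=0$, with $X(0)=\xi_p$ and $V(0)=V_0$.

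Now I specialize each term of \eqref{sub32} at $t=0$ using $X=\xi$. From $\varphi\xi=0$ together with the decomposition \eqref{f0} I obtain
\begin{equation*}
\mathcal{B}\xi=0,\qquad \mathcal{C}\xi=0,
\end{equation*}
so the right-hand side $g(\mathcal{H}\nabla_{\dot{\alpha}}\mathcal{C}X+(\mathcal{T}_{V}+\mathcal{A}_{X})\mathcal{B}X,\varphi V)$ vanishes at $t=0$. On the left-hand side, $\eta(X(0))=\eta(\xi_p)=1$ and $\|V(0)\|^{2}=1\ne 0$. Thus \eqref{sub32} reduces to
\begin{equation*}
\bigl\{g(\nabla f,\xi)-1\bigr\}_p=0,
\end{equation*}
which is the desired equality at $p$. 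Since $p\in M$ was arbitrary, \eqref{c4} follows on all of $M$.

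There is no real obstacle here: the only subtlety is verifying that such a geodesic $\alpha$ exists and that \eqref{sub32} is genuinely being applied at the correct instant — but this is immediate since geodesics through $p$ with prescribed initial velocity always exist, and the Clairaut hypothesis forces \eqref{sub32} to hold along every geodesic of $M$, hence in particular at $t=0$.
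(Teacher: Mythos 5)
Your route is the one the paper intends: the paper's entire argument for this corollary is the phrase ``From \eqref{sub32}, we immediately have,'' and specializing \eqref{sub32} along a geodesic whose velocity at $t=0$ has horizontal part $\xi_p$ and a unit vertical part is exactly that specialization. Your treatment of the tensorial terms is fine: $\varphi\xi=0$ gives $\mathcal{B}X(0)=\mathcal{C}X(0)=0$, and since $\mathcal{T}$ and $\mathcal{A}$ act pointwise, $(\mathcal{T}_{V}+\mathcal{A}_{X})\mathcal{B}X$ vanishes at $t=0$.

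There is, however, a gap in the way you discard the remaining term $\mathcal{H}\nabla_{\dot\alpha}\mathcal{C}X$. This is a covariant derivative of the vector field $t\mapsto\mathcal{C}X(t)$ along $\alpha$, and $\mathcal{C}X(0)=0$ does not force its derivative at $t=0$ to vanish: the horizontal part $X(t)$ of $\dot\alpha(t)$ need not remain proportional to $\xi$ for $t\neq 0$, so $\mathcal{C}X(t)$ is in general nonzero near $t=0$ and $\nabla_{\dot\alpha}\mathcal{C}X|_{t=0}$ can be nonzero. What is true — and what you need — is that its pairing with $\varphi V$ vanishes at $t=0$: along $\alpha$ one has $g(\mathcal{C}X,\varphi V)=g(\varphi X,\varphi V)=g(X,V)-\eta(X)\eta(V)=0$ (using that $\mathcal{B}X$ is vertical, $X\perp V$ and $\eta(V)=0$ since $\xi$ is horizontal in the Kenmotsu case), so differentiating gives $g(\nabla_{\dot\alpha}\mathcal{C}X,\varphi V)=-g(\mathcal{C}X,\nabla_{\dot\alpha}\varphi V)$, whose right-hand side vanishes at $t=0$ precisely because $\mathcal{C}X(0)=0$. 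With this inserted, the right-hand side of \eqref{sub32} is indeed zero at $t=0$, and your conclusion $\{g(\nabla f,\xi)-1\}\|V_0\|^{2}=0$, hence \eqref{c4} at the arbitrary point $p$, goes through. So the proposal is correct in outline and matches the paper's (implicit) proof, but the vanishing of the derivative term must be justified by this orthogonality argument rather than by $\mathcal{C}\xi=0$ alone.
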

\begin{example} \label{ex3}
Let $M$ be a 3-dimensional Euclidean space given by \[M=\{(x,y,z)\in\mathbb{R}^3\mid (x,y)\neq (0,0)\; \textrm{and}\;z\neq 0\}.\]
Following the Example 1 of \cite{Be}, we define the Kenmotsu structure $(\varphi,\xi,\eta,g)$ on $M$ given by

 \[\xi=\frac{\partial}{\partial z},\; \eta=dz,\;g=\left(
 \begin{array}{lll}
 e^{2z}&0&0\\
 0&e^{2z}&0\\
 0&0&1
 \end{array}\right)\;\textrm{and}\; \varphi=\left(
 \begin{array}{cll}
 0&1&0\\
 -1&0&0\\
 0&0&0
 \end{array}\right)\]
 A $\varphi$-basis for this structure can be given by ${\{E_1=e^{-z}\frac{\partial}{\partial y},E_2=e^{-z}\frac{\partial}{\partial x},E_3=\xi\}}$.\\
 Let $N$ be $\{(u,z)\in\mathbb{R}^2\mid z\neq 0\}.$ We choose the Riemannian metric $g_N$ on $N$ in the following form
  \[\left(\begin{array}{ll}
  e^{2z}&0\\
  0&1
 \end{array}\right).\]
 Now, we define the map $\pi: (M,\varphi, \xi,\eta,g)\rightarrow (N,g_N)$ by
$$\pi(x,y,z)=(\frac{x+y}{\sqrt{2}},z).$$
Then the Jacobian matrix of $\pi$ is
 \[\left(\begin{array}{ccc}
 \frac{1}{\sqrt{2}}&\frac{1}{\sqrt{2}}&0\\
 0&0&1
 \end{array}\right).\]
Since the rank of this matrix is equal to 2, the map $\pi$ is a submersion. After simple calculations, we see that
\[ker\pi_{*}=span\{U=\frac{E_1-E_2}{\sqrt{2}}\}\,\;\textrm{and}\,\; ker\pi_{*}^\bot=span\{Z=\frac{E_1+E_2}{\sqrt{2}},\;Y=\xi\}.\]
By direct calculation, we see that $\pi$ satisfies the condition \textbf{S2)} and $\varphi(U)=-X.$ Thus, $\pi$ is an anti-invariant Riemannian submersion. In particular, $\pi$ is Lagrangian. Moreover, the fibers of $\pi$ are clearly totally umbilical, since they are one dimensional. Here, we shall find that a function $f$ on $M$ satisfying $\mathcal{T}_U U=-\nabla f.$\\
Indeed, upon direct computations, we have \[\nabla_U U=\frac{1}{2}(\nabla_{E_1}E_1-\nabla_{E_1}E_2-\nabla_{E_2}E_1+\nabla_{E_2}E_2).\]
Using the given Kenmotsu structure, we find \[\nabla_{E_1}E_1=\nabla_{E_2}E_2=-\frac{\partial}{\partial z}\] and \[\nabla_{E_1}E_2=\nabla_{E_2}E_1=0.\]
Thus, we have \[\nabla_U U=-\frac{\partial}{\partial z}.\] By \eqref{e5}, we obtain \[\mathcal{T}_U U=-\frac{\partial}{\partial z}.\]
On the other hand, for any function $f$ on $M$, the gradient of $f$ with respect to the metric $g$ is given by \[\nabla f=\sum_{i,j}^{3} g^{ij}\frac{\partial f}{\partial x_i}\frac{\partial}{\partial x_j}=e^{-2z}\frac{\partial f}{\partial x}\frac{\partial }{\partial x}+e^{-2z}\frac{\partial f}{\partial y}\frac{\partial }{\partial y}+\frac{\partial f}{\partial z}\frac{\partial }{\partial z}.\]
Then, it is easy to see that  ${\nabla f=\frac{\partial }{\partial z}}$  for the function $f=z$ and $\mathcal{T}_U U=-\nabla f=-\xi.$\\
Furthermore, for any vertical vector field $V$, we conclude that
 \begin{equation}\nonumber
\mathcal{T}_V V=-\|V\|^{2}\nabla f
\end{equation}
from the last fact. Thus, by Theorem \ref{dortdort}, the submersion $\pi$ is Clairaut.\\

Now, by using our result Theorem \ref{theo2}, we show that the submersion $\pi$ is Clairaut.\\
Indeed, if $X$ is any horizontal vector field proportional to $\xi$, then it is easy to see that the condition \eqref{sub32} is fulfilled.
Next, let $X$ be any horizontal vector field orthogonal to $\xi$ and $V$ be any vertical vector field,
then using \eqref{new1}, \eqref{f1} and the equation (59) of Corollary 7.2 of \cite{Ta2}, we have
\begin{align*}
g(\mathcal{T}_V\mathcal{B} X,\varphi V)=&g(\mathcal{T}_V\varphi X,\varphi V)\\
=&g(\varphi\mathcal{T}_VX+g(\varphi V,X)\xi-\eta(X)\varphi V,\varphi V)\\
=&g(\varphi\mathcal{T}_VX,\varphi V)\\
=&g(\mathcal{T}_VX,V)\\
=&-g(\mathcal{T}_VV,X)\\
=&g(\nabla f,X)\|V\|^{2}.\\
\end{align*}
Hence, we obtain
\begin{equation} \label{last1}
g(\mathcal{T}_V\mathcal{B} X,\varphi V)=0\,,
\end{equation}
since $\nabla f=\xi.$ Additionally, by Theorem 7.3 of \cite{Ta2}, we have $\mathcal{A}\equiv0,$
since $\pi$ is Lagrangian. Then, using this fact, \eqref{last1} and \eqref{f1}, the condition \eqref{sub32} is fulfilled.
Thus, by Theorem \ref{theo2}, the given submersion is Clairaut.
\end{example}
\begin{remark}
We notice that the Clairaut Lagrangian submersion given in Example \ref{ex3} satisfies the condition \eqref{c4} of
Corollary \ref{cor4}.
\end{remark}

\end{document}